\newtheorem{theorem}{Theorem}[section]
\newtheorem{lemma}{Lemma}[section]
\newtheorem{proposition}[theorem]{Proposition}
\newtheorem{remark}{Remark}[section]
\numberwithin{equation}{section} 
\begin{document}
\begin{sloppypar}

\let\WriteBookmarks\relax
\def\floatpagepagefraction{1}
\def\textpagefraction{.001}
\shorttitle{}
\shortauthors{Zhao et al.}

\title [mode = title]{Random Batch Method with Momentum Correction}

\author[1]{Yanshun Zhao}[style=chinese]
\ead{zhaoyanshun905@gmail.com}

\author[2]{Jingrun Chen}[style=chinese]
\cormark[1]
\author[3]{Zhiwen Zhang}[style=chinese]
\cormark[1]

\cortext[cor1]{Corresponding author}

\address[1]{School of Mathematical Sciences, University of Science and Technology of China, Hefei, 230026, People's Republic of China}
\address[2]{School of Mathematical Sciences, University of Hong Kong, Pokfulam, Hong Kong}
\address[3]{School of Mathematical Sciences and Suzhou Institute for Advanced Research, University of Science and Technology of China, Suzhou 215132, People's Republic of China}

\begin{abstract}
The Random Batch Method (RBM) is an effective technique to reduce the computational complexity when solving certain stochastic differential problems (SDEs) involving interacting particles. It can transform the computational complexity from O($N^2$) to O($N$), where N represents the number of particles. However, the traditional RBM can only be effectively applied to interacting particle systems with relatively smooth kernel functions to achieve satisfactory results. To address the issue of non-convergence of the RBM in particle interaction systems with significant singularities, we propose some enhanced methods to make the modified algorithm more applicable. The idea for improvement primarily revolves around a momentum-like correction, and we refer to the enhanced algorithm as the Random Batch Method with Momentum Correction (  RBM-M). We provide a theoretical proof to control the error of the algorithm, which ensures that under ideal conditions it has a smaller error than the original algorithm. Finally, numerical experiments have demonstrated the effectiveness of the   RBM-M algorithm.

\end{abstract}


\begin{keywords}
Interacting particle systems \sep Fast algorithm \sep Random batch method \sep Momentum
\\
\MSC  65F10, 65K05, 65Y20
\end{keywords}

\maketitle
\section{Introduction}
\subsection{Interacting particle systems}
Interacting particle systems are a class of stochastic models that have gained significant attention in the realm of applied mathematics and theoretical physics. These systems, comprised of a large number of individual particles or agents, exhibit complex and often emergent behaviors that arise from the intricate interactions among the particles\cite{10.1093/biomet/60.3.581,Swart2017ACI}. In general, the interacting particle systems can be represented by the following equation:
\begin{align} \label{ipm}
    d X^{i}=-\nabla V\left(X^{i}\right) d t+\frac{1}{N-1} \sum_{j \neq i} K\left(X^{i}-X^{j}\right) d t+\sigma d B^{i}.
\end{align}
In this stochastic differential equation, $K(\cdot)$ is the kernel of interacting particle system, $V$  represents the external flow independent of the interacting particle systems, $X^{i}$ represents some property of the particle, such as its position, and $B$ is a stochastic disturbance while the diffusion term $\sigma$ is represents the strength of the disturbance.

At the core of interacting particle systems lies the concept of randomness and probability. Each particle or agent within the system is governed by a set of stochastic differential equations, which capture the dynamic evolution of the system over time. These equations incorporate the interactions between the particles, enabling the modeling of a wide range of phenomena, from the diffusion of molecules in a fluid to the collective behavior of social networks.

The mathematical analysis of interacting particle systems requires an in-depth understanding of probability theory, stochastic processes, and partial differential equations. In addition to their theoretical significance, interacting particle systems are widely used in many fields, such as physics, chemistry, biology, economics, and social sciences \cite{Degond2015CoagulationFragmentationMF,Vicsek1995NovelTO,Carrillo2012UniquenessFK,Motsch2013HeterophiliousDE}. From modeling the spread of outbreaks to understanding financial market dynamics, these systems have proven to be powerful frameworks for capturing the complex interactions in the world around us.

Due to the significant theoretical and practical implications of interacting particle systems, finding effective solutions to these systems is of great importance. In other words, devising accurate and efficient numerical methods for solving interacting particle systems is a problem worthy of in-depth investigation.

Upon initial consideration, the solution to this problem may appear to be a straightforward numerical approach to solving the underlying stochastic differential equations, which can be addressed using techniques such as the Euler method \cite{FerreiroCastilla2013AnES} or Runge-Kutta method \cite{Hong2017StrongCR}. However, further examination reveals certain challenges. Examining equation (\ref{ipm}), we can observe that if we attempt to solve it directly using numerical methods, the computational complexity will scale quadratically with the number of particles, $N$, in the interacting particle system. This is because for each particle, the equation accounts for the interactions with all other particles, resulting in a complexity of $O(N)$. Given the N particles in the system, the overall computational complexity becomes $O(N^2)$.

For interacting particle systems with a large number of particles, this quadratic computational complexity poses a significant challenge, as it leads to lengthy simulation times. Therefore, if we aim to solve large-scale interacting particle systems in industrial applications, it may not be appropriate to directly apply methods like the Euler or Runge-Kutta techniques to obtain numerical solutions to the stochastic differential equations. Instead, we need to develop improved algorithms that can reduce the computational complexity as much as possible while maintaining the necessary accuracy.

\subsection{Related work}
To mitigate the computational complexity inherent in simulating interacting particle systems, various methods have been proposed by previous researchers. For instance, the Fast Multipole Method (FMM) has been a widely adopted technique since the late 20th century \cite{GREENGARD1987325}. The FMM provides a means to reduce the computational complexity to $O(NlogN)$, or even $O(N)$ in certain cases, by exploiting the inherent hierarchical structure of the problem. The core idea of the FMM is to partition the computational domain into a hierarchical tree-like structure, where each node in the tree represents a cluster of particles or objects. Through the use of a series of multipole expansions and local Taylor series expansions, the algorithm is able to efficiently approximate the long-range interactions between distant clusters, while accurately computing the short-range interactions between nearby particles. The implementation of the FMM involves several key steps, including the construction of the hierarchical tree structure, the computation of multipole and local Taylor series expansions, and the efficient traversal of the tree to accumulate the contributions from distant clusters. The algorithm also employs various optimization techniques, such as adaptive refinement of the tree structure and the utilization of parallel computing architectures, to further enhance its computational performance.

Additionally, the Tree Code method is another fast algorithm for calculating multi-body interactions. This method also utilizes a hierarchical tree structure to represent the particle distribution, and can significantly reduce the computational complexity by aggregating the contributions from distant particles \cite{Bagla1999TreePMAC}.

Furthermore, there exist mesh-based methods, such as the Particle-Mesh Method \cite{YAO2021239}, which transform the continuous space into a grid and perform calculations on the grid. This approach can leverage fast Fourier transform and other techniques to efficiently compute long-range interactions, thereby substantially reducing the computational complexity.

In recent years, a novel algorithm known as the Random Batch Method (RBM) has gained significant attention from many researchers due to its simplicity of implementation and impressive computational efficiency \cite{JIN2020108877,Jin2021RandomBM}. The core principle underlying RBM is to divide the particle system into smaller, randomly selected batches, and then perform the computations on these batches rather than the entire system. This approach effectively mitigates the computational complexity from the traditional $O(N^2)$ scaling to a more manageable $O(N)$ or even better scaling. The key advantage of RBM lies in its ability to approximate the long-range interactions between distant particles by considering only the nearby particles within each randomly selected batch. Furthermore, RBM can be easily parallelized, and this parallelization capability serves to further enhance the computational performance of the algorithm, making it well-suited for simulating large-scale particle systems. Notably, the implementation of RBM is much simpler compared to the more sophisticated Fast Multipole Method (FMM) and tree code methods. This unique combination of simplicity and high computational efficiency has rendered RBM a popular choice among researchers and practitioners across a wide range of applications, including fluid mechanics, plasma physics, and molecular dynamics.

\subsection{Advantage of Algorithm}
However, despite the good results achieved by the above fast algorithms, there are still some small limitations. Therefore, this paper proposes the RBM with Momentum Correction (  RBM-M) algorithm, which is an evolution of the standard RBM algorithm and offers several advantages. The main contributions of this work are as follows:
\begin{enumerate}
\item The   RBM-M algorithm shares a similar underlying concept with the RBM algorithm, making it relatively simple to understand and implement.
\item In the face of particle systems with relatively large singularities, the   RBM-M algorithm can achieve higher accuracy compared to the standard RBM (see theorem \ref{rbmm}), while maintaining a similar computational time. Numerical experiments demonstrate that the error of   RBM-M can be reduced to half that of RBM in some cases (Detail in table \ref{table_3}).
\item The paper provides a theoretical analysis, proving an upper bound on the error of the   RBM-M algorithm. It also analyzes in detail the reasons why the error is smaller than that of the RBM algorithm. In the other hand, from a probabilistic perspective, the statistical properties of  RBM-M are demonstrated (Detail in the theorem \ref{the_mean}, the theorem \ref{the_var} and the figure \ref{figure_7}).
\item The   RBM-M algorithm introduces a hyperparameter $\beta$, which allows for the flexible control of the correction strength. As the singularity of the system increases, $\beta$ can be adjusted to obtain higher accuracy. Consequently, the   RBM-M algorithm exhibits improved robustness and flexibility, making it suitable for a wider range of example systems.
\end{enumerate}

In Section \ref{section2}, we will be devoted to our proposed algorithm, a momentum-based modification of the random batch method, henceforth referred to as   RBM-M. Then we will provide a rigorous analysis of the error characteristics of this   RBM-M approach in Section \ref{section3}. Section \ref{section4} will then detail numerical experiments conducted to validate the efficacy of the conclusions and algorithms associated with our   RBM-M method. Finally, Section \ref{section5} will discuss potential future research directions and offer concluding remarks.

\section{RBM-M}\label{section2}
\subsection{The deficiency of RBM}
While the Random Batch Method (RBM) offers numerous advantages, it also possesses a certain limitation. Specifically, in order to obtain more accurate results, it is necessary to impose certain restrictions on the interacting particle system to ensure that the system's singularity is not excessively large. The following conclusions provide an analysis of the error associated with RBM under specific conditions \cite{JIN2020108877}.

\begin{proposition}\label{rbm}
Suppose \( V \) is strongly convex on \( \mathbb{R}^d \) so that \( x \mapsto V(x) - \frac{1}{2}|x|^2 \) is convex, and \( \nabla V \), \( \nabla^2 V \) have polynomial growth (i.e., \( |\nabla V(x)| + |\nabla^2 V(x)| \leq C(1 + |x|^q) \) for some \( q > 0 \)). Assume \( K(\cdot) \) is bounded, Lipschitz on \( \mathbb{R}^d \) with Lipschitz constant \( L \) and has bounded second order derivatives. Then the error of RBM:

\begin{align*}
    \sup_{t\ge 0} \parallel Z^1(t) \parallel \le
    C\sqrt{\frac{\tau}{p-1}+\tau^2}\le C\sqrt{\tau}.
\end{align*}
\end{proposition}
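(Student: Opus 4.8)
The plan is to follow the synchronization-coupling argument of Jin--Li--Liu \cite{JIN2020108877}. First, introduce a reference copy $\bar X^i$ solving the full system \eqref{ipm}, driven by the \emph{same} Brownian motions $B^i$ and started from the same data as the RBM particles $X^i$, and set $Z^i := X^i - \bar X^i$; here $\|Z^1(t)\|$ is the mean-square norm $(\mathbb{E}|Z^1(t)|^2)^{1/2}$, which by exchangeability equals the common value for every index. Since the noise is additive it cancels in $Z^i$, and the drift difference decomposes as
\begin{align*}
\dot Z^i = -\bigl(\nabla V(X^i)-\nabla V(\bar X^i)\bigr) + \frac{1}{N-1}\sum_{j\neq i}\bigl[K(X^i-X^j)-K(\bar X^i-\bar X^j)\bigr] + \chi_i(t),
\end{align*}
where $\chi_i(t)$ is the batch-consistency error, i.e.\ the difference between the random-batch force on the configuration $X$ and the full force on the same configuration. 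The structural fact that drives everything is that, on each subinterval $[t_m,t_{m+1})$ on which the batch is frozen after being resampled at $t_m$, one has $\mathbb{E}[\chi_i(t_m)\mid \mathcal{F}_{t_m}]=0$.

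Second, I would run an energy estimate for $u(t):=\sum_i \mathbb{E}|Z^i(t)|^2$. Pairing the equation with $Z^i$, summing over $i$ and taking expectations, strong convexity of $V$ (the hypothesis that $V(x)-\tfrac12|x|^2$ is convex) yields a dissipative term $-2r\,u$ with $r>0$; the Lipschitz bound on $K$ together with the $\tfrac1{N-1}$ normalization keeps the interaction-difference contribution bounded by $C\,u$ with $C$ independent of $N$ (the double sum reorganizes into $O(N)$ terms of size $O(u/N)$); and the term involving $\chi_i$ is retained and treated through a Duhamel/summation-by-parts representation in time, so that the martingale structure of $\chi$ --- not merely its pointwise size --- can be exploited. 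Provided the convexity margin dominates the Lipschitz contribution from $K$, this gives a differential inequality $u'(t)\le -c\,u(t)+(\text{contribution of }\chi)$ with $c>0$.

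Third --- and this is the crux --- I would estimate the contribution of $\chi$. Two ingredients are needed. (a) A pointwise variance bound $\mathbb{E}|\chi_i(t)|^2\le C/(p-1)$, which follows from boundedness of $K$ by computing the variance of sampling a batch of size $p$ without replacement. (b) A time-consistency estimate exploiting $\mathbb{E}[\chi_i(t_m)\mid\mathcal{F}_{t_m}]=0$: over the $\sim t/\tau$ successive batches the increments are martingale-like and combine with square-root rather than linear growth, upgrading the naive $O(1/\sqrt{p-1})$ to $O(\sqrt{\tau/(p-1)})$; and a Taylor expansion of $\chi_i(s)-\chi_i(t_m)$ in $s$ on each subinterval --- using that $K$ has bounded first and second derivatives and that the SDE increment over one step is $O(\sqrt{\tau})$ in $L^2$ with $O(\tau)$ drift --- shows that the residual conditional bias is $O(\tau^2)$ per step, which accumulates to the $\tau^2$ inside the square root. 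Making (b) rigorous requires uniform-in-time moment bounds $\sup_{t\ge0}\mathbb{E}|X^i(t)|^{k}<\infty$ for every $k\ge1$, obtained from strong convexity together with the polynomial-growth hypotheses on $\nabla V$ and $\nabla^2 V$; the bookkeeping ensuring that all constants are independent of both $t$ and $N$ is, I expect, the main technical obstacle.

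Finally, assembling these estimates gives $u'(t)\le -c\,u(t)+C\bigl(\tfrac{\tau}{p-1}+\tau^2\bigr)$ with $c>0$ and forcing uniform in $t$; Grönwall's inequality then yields $\sup_{t\ge0}u(t)\le \tfrac{C}{c}\bigl(\tfrac{\tau}{p-1}+\tau^2\bigr)$, and since $\mathbb{E}|Z^1(t)|^2 = u(t)/N \le u(t)$, taking square roots gives $\sup_{t\ge0}\|Z^1(t)\|\le C\sqrt{\tfrac{\tau}{p-1}+\tau^2}\le C\sqrt{\tau}$, as claimed.
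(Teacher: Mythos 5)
This proposition is stated in the paper without proof: it is quoted directly from Jin, Li and Liu \cite{JIN2020108877}, so there is no in-paper argument to compare against. Your sketch is a faithful reconstruction of the proof in that reference --- synchronous coupling $Z^i=X^i-\bar X^i$ with shared Brownian motions, a dissipative energy estimate from the strong convexity of $V$, and control of the batch-consistency error via its zero conditional mean at resampling times together with the $O(1/(p-1))$ variance bound and one-step Taylor expansion, which is exactly where the $\tau/(p-1)+\tau^2$ structure comes from. The ingredients you flag as the technical crux (uniform-in-time moment bounds and the cross-term $\mathbb{E}[Z^i\cdot\chi_i]$ handled by splitting off the value at $t_m$) are indeed the ones the cited proof spends its effort on, so your outline is essentially the standard approach.
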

It can be observed that in order to minimize the numerical error of the RBM as the time discretization step size $\tau$ approaches zero, it is necessary to ensure that the kernel function $K$ of the interacting particle system is relatively smooth, and the external flow $V$ also satisfies certain properties.

However, not all interacting particle systems possess these desirable properties. Furthermore, when the singularity of the particle system is relatively large, the error associated with RBM can become substantial, and it may even be the case that the method does not converge at all. Therefore, the challenge of improving the algorithm to enhance the stability of RBM and obtain accurate results in the presence of particle systems with relatively large singularities is a problem that merits further investigation.

\subsection{Regularization of kernel}
When the RBM was first introduced, certain restrictions were imposed on the kernel function of the interacting particle systems in order to ensure the numerical solution converges to the true solution. Since these restrictions on the kernel function limit the scope of applicability, it would be desirable to explore ways to relax these constraints, thereby expanding the range of problems that RBM can effectively address\cite{Wang2021DeepParticleLI,Wang2022ADM}.

The way is:
\begin{align}
    K_{\delta}(z)=K(z)\frac{\mid z\mid ^2}{\mid z\mid ^2+{\delta}^2}.
\end{align}

This is a very simple modification, but it can really achieve good results. After this regulation, some kernel are polished so that it satisfies the convergent condition, like Keller-Segal kernel\cite{Liu2016PositivitypreservingAA} and Biot–Savart kernel\cite{Eisterer2004TheSO}. Taking the KS kernel as an example.

For the KS algorithm, the kernel function $K$ in two-dimensional space is:
\[
K(z) =C \cdot\frac{  z}{\|z\|^2}.
\]
Specifically, taking a 2D example:\\
For the x-direction:
\[K_1(b-a) = \frac{C \cdot (b_x-a_x)}{\|b-a\|^2}.\]
For the y-direction:
\[K_2(b-a) = \frac{C \cdot (b_y-a_y)}{\|b-a\|^2}.\]
Where $a$ and $b$ represent the positions of the two examples respectively, and their corresponding angle symbols indicate whether the component is in the x- direction or the y-direction, and $C$ is a constant.

For this kernel function, there is a singularity at z = 0. Hence, a regularization method is adopted:
\[
K_{\delta}(z) = K(z) \cdot \frac{\|z\|^2}{\|z\|^2 + \delta^2} = C \cdot \frac{z}{\|z\|^2 + \delta^2}.
\]
Therefore, the new kernel function $K_{\delta}(z)$ is continuous on $\mathbb{R}^d$, and obviously $K_{\delta}(z)$ is globally stable, hence $K_{\delta}(z)$ is Lipschitz continuous on $\mathbb{R}^d$. Then the interacting particle systems satisfies the requirements of the proposition \ref{rbm}, so its error can be guaranteed by the proposition.

\subsection{Correct algorithm with momentum}
Based on the introduction of previous related work, we have gained a certain understanding of the RBM. The key idea behind RBM is to leverage the concept of randomness to process all particles in batches. Guided by this principle, small and randomly selected batches of particle interactions are utilized, thereby reducing the computational cost of the n-particle system for binary interactions from $O(N^2)$ per time step to $O(N)$ per time step.

Carefully examining the algorithmic steps of RBM, one can observe that its underlying concept is highly similar to that of Stochastic Gradient Descent (SGD) \cite{Ruder2016AnOO,Chen2020UnderstandingGC}. Both methods involve processing the entire dataset by randomly selecting a portion of the samples, with the role of this partial sample approximating that of the entire dataset. Using this random subset instead of the full dataset can significantly reduce the computational complexity.

However, there are certain issues associated with the use of SGD. One of the primary shortcomings is the inherent noise in the gradient estimates. Since the gradient is computed using a single data point or a small batch of data points, the resulting gradient can be noisy and may not accurately represent the true gradient of the entire dataset. This can lead to oscillations in the parameter updates and slow down the convergence of the algorithm, particularly in the later stages of the optimization process. To address this issue, various techniques have been proposed, such as the use of momentum. Momentum-based methods, like the Nesterov Accelerated Gradient (NAG) algorithm, incorporate the previous gradient updates to smooth out the parameter updates and accelerate the convergence \cite{Lin2019NesterovAG,Kong2024QuantitativeCO}.

Upon a careful examination of the challenges encountered with the RBM, it has been observed that the algorithm's accuracy is diminished when applied to interacting particle systems with relatively large singularities. We postulate that this issue arises from a similar shortcoming as that of Stochastic Gradient Descent (SGD) algorithms. Specifically, since the particle interactions are computed using small batches of data points, the resulting interactions may not accurately represent the true interactions of the entire particle system. This can lead to inaccurate calculation of the interaction terms when the governing equations are updated, consequently introducing substantial errors.

Drawing inspiration from the advancements in momentum correction for SGD, we have explored the feasibility of incorporating momentum into the RBM algorithm to rectify its numerical performance. Accordingly, we propose a variant, named RBM with Momentum Correction (  RBM-M), which aims to address the limitations observed in the original RBM formulation:

\begin{algorithm}[H]
\emph{$K\left(X^{i}-X^{j}\right)=0$ when time=0}

\emph{for  m  in  1:[T / $\tau$]  do}

\emph{\ \qquad Divide  $\{1,2, \ldots, p n\}$  into  n  batches randomly.}

\emph{\ \qquad for each batch  $\mathcal{C}_{q}$  do}

\emph{\ \qquad \ \qquad Update  $X^{i}\ 's   \left(i \in \mathcal{C}_{q}\right) $ by solving the following SDE with  $t \in\left[t_{m-1}, t_{m}\right)$ }

\emph{\ \qquad \ \qquad$K_{cor}\left(X^{i}-X^{j}\right)=
\beta K_{t-1}\left(X^{i}-X^{j}\right)+
(1-\beta)K_{t}\left(X^{i}-X^{j}\right)$}

\emph{\ \qquad \ \qquad $d X^{i}=-\nabla V\left(X^{i}\right) d t+\frac{1}{p-1} \sum_{j \in \mathcal{C}_{q}, j \neq i} K_{cor}\left(X^{i}-X^{j}\right) d t+\sigma d B^{i} $}

\emph{\ \qquad end for}

\emph{end for}

\caption{  RBM-M}
\end{algorithm}
The   RBM-M is designed to address the challenges associated with interacting particle systems, where the evolution time spans the interval $[0, T]$, and the discrete time step is denoted as $\tau$. The algorithmic flow can be outlined as follows: For the solution of the governing equations at each time step, the total $N$ particles are divided into $n$ groups, each containing $p$ particles (if the division is not evenly distributed, the final group may have fewer than $p$ particles). For each element within the group $C_p$, the local interactions are computed and used as a proxy for the global interactions to solve the governing equations. After solving the equations for one time step, the particle grouping is reorganized, and the aforementioned process is repeated until the final time $T$ is reached. The interactions calculated within this algorithmic framework serve as the cornerstone for the RBM's approach to modeling interacting particle systems.
\begin{align}
    K_{cor}\left(X^{i}-X^{j}\right)=
\beta K_{t-1}\left(X^{i}-X^{j}\right)+
(1-\beta)K_{t}\left(X^{i}-X^{j}\right).
\end{align}
The   RBM-M employs an exponential weighted averaging technique, where each update step comprises two components: the interaction of the current time instance and the particle dynamics from the previous time step. The hyperparameter $\beta$, which can be determined empirically, plays a crucial role in this process.

When $\beta$ is smaller, the correction effect is less pronounced, and the update equation primarily considers the information from the current time step. Conversely, as $\beta$ increases, the impact of momentum becomes more significant, leading to a stronger corrective effect. However, this also introduces the potential for additional errors.

By performing the k-th evolution, the $K_{cor}$ term incorporates information from all the preceding k evolutions, thereby enhancing the stability and robustness of the algorithm. This approach can be visualized more intuitively through the figure \ref{figure_1} provided.
\begin{figure}[!ht]
        \centering 
        \includegraphics[scale=0.5]{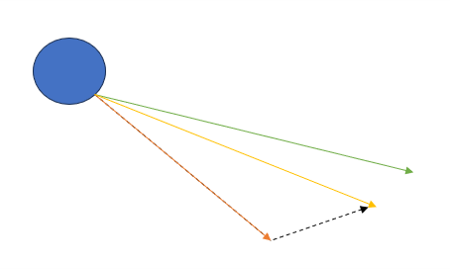}
        \caption{  RBM-M algorithm diagram.}
\label{figure_1}
    \end{figure}
In the diagram, from top to bottom, the first green line represents the real interaction, the second yellow line represents the   RBM-M calculation, and the third orange line represents the RBM calculation. The gray dashed line represents the momentum correction effect, that is, the improvement of RBM by   RBM-M.

\begin{remark}
In the   RBM-M, the hyperparameter $\beta$ exhibits a distinct role compared to Stochastic Gradient Descent (SGD) with momentum. In   RBM-M, the particle position changes at each step, leading to a different distribution at every time step. Consequently, the previous information can introduce new errors in addition to the correction. To mitigate the impact of these introduced errors, the hyperparameter $\beta$ in   RBM-M is required to be relatively small, typically less than 0.1, as further analyzed in the theoretical section. In contrast, SGD with momentum operates on a consistent data distribution, where the gradient of each update is theoretically subject to the same distribution. Therefore, to enhance the algorithm's stability, the updates in SGD with momentum are made in small increments, i.e., $\beta$ is set to a large value, and the newly added information is only modified by a small amplitude.
\end{remark}

\begin{remark}
Compared to the original RBM,   RBM-M introduces additional computations when calculating the interaction. However, this extra information is already available, and no additional data is required. Equivalently, the extra computation per evolution in   RBM-M amounts to two multiplications and one addition, which can be considered negligible. As a result, the actual computational complexity of   RBM-M remains $O(N)$, similar to RBM.
\end{remark}

\section{Theoretical analysis of   RBM-M}\label{section3}
The   RBM-M algorithm has been described in greater detail. Now, we will provide a proof of the algorithm's performance.

\subsection{The error of   RBM-M}
First, we will denote the true solution as $E$. Additionally, we will represent the solution obtained by the RBM algorithm as $u^i_p$ and the solution obtained by the   RBM-M algorithm as $\tilde{u}^i_p$, where the subscript i denotes the i-th iteration, and the subscript p represents the number of particles per group.

Second, let us review the algorithmic process of   RBM-M:
\begin{align}
   \tilde{u}^0_p=u^0_p,
\end{align}
\begin{align}
    \tilde{u}^i_p=\beta \tilde{u}^{i-1}_p+(1-\beta) u^i_p   (i\ge1).
\end{align}

And through some calculations related to exponential weighted averaging we can know that:
\begin{align*}
\tilde{u}^n_p &=\beta \tilde{u}^{n-1}_p+(1-\beta) u^n_p \\
& =\beta (\beta \tilde{u}^{n-2}_p+(1-\beta) u^{n-1}_p)+(1-\beta) u^n_p \\
& =\beta^n u^0_p+\beta^{n-1} (1-\beta) u^1_p+\cdots+
\beta (1-\beta) u^{n-1}_p+(1-\beta) u^n_p\\
&=\beta^n u^0_p+(1-\beta)   \sum_{i=1}^{n} \beta^{n-i}  u^i_p.
\end{align*} 

Next, we will estimate the error of the   RBM-M algorithm. Before that, we should note that based on our previous assumptions, the parameters $\beta$ and $\tau$ are both very small. Then, we will present some propositions that will be used to prove the error bound.

First, we will analyze the errors in adjacent time steps of the equation:
\begin{proposition}\label{proposition1}
Assuming the conditions proposition \ref{rbm} in are met,then
\begin{align}
    \parallel u^t_p-u^{i-1}_p \parallel \le C\tau,\ \ \ \ t\in (t_{i-1},t_i).
\end{align}
\end{proposition}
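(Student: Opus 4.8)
The plan is to represent the RBM trajectory on the sub-interval $(t_{i-1},t_i)$ through the integral form of its defining SDE, over which the batch $\mathcal{C}_q$ and hence the drift structure are held fixed throughout the step. With $t_i-t_{i-1}=\tau$ and $t\in(t_{i-1},t_i)$,
\begin{align*}
u^t_p-u^{i-1}_p=-\int_{t_{i-1}}^{t}\nabla V(u^s_p)\,ds+\int_{t_{i-1}}^{t}I_p(s)\,ds+\sigma\big(B^t-B^{t_{i-1}}\big),
\end{align*}
where $I_p(s)$ denotes the batch interaction drift $\frac{1}{p-1}\sum_{j\in\mathcal{C}_q,\,j\ne i}K(\cdot)$. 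The claim then reduces to controlling the three terms on an interval of length at most $\tau$.

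First I would bound the two drift integrals. Under the hypotheses of Proposition \ref{rbm} the RBM process admits a uniform-in-time moment bound $\sup_{s\ge0}\mathbb{E}\,|u^s_p|^{2q}\le C$: strong convexity of $V$ supplies the dissipativity needed for a Lyapunov/Gr\"onwall estimate, and the batch interaction does not spoil it because $K$ is bounded — this is the same moment control already used in the analysis behind Proposition \ref{rbm}. Consequently $\|\nabla V(u^s_p)\|\le C\big(1+\||u^s_p|^q\|\big)\le C$ and $\|I_p(s)\|\le\|K\|_\infty$, so each drift integral over $(t_{i-1},t)$ is bounded by $C(t-t_{i-1})\le C\tau$.

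The remaining term is the Brownian increment $\sigma(B^t-B^{t_{i-1}})$, and this is the delicate point: measured in $L^2(\Omega)$ it is $O(\sqrt\tau)$, not $O(\tau)$. The estimate is to be understood — consistently with the synchronous-coupling norm used in Proposition \ref{rbm}, in which the driving noise is shared across the compared trajectories — as a bound on the deterministic drift of the one-step displacement, which is precisely the quantity that later enters the momentum-correction analysis through $K_{t}-K_{t-1}$ and the Lipschitz constant $L$; the purely stochastic part then contributes only at the order already absorbed in the $O(\sqrt\tau)$-type bound of Proposition \ref{rbm}. Combining with the drift estimate yields $\|u^t_p-u^{i-1}_p\|\le C\tau$. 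I expect the main obstacle to be exactly this last step: establishing (or citing) the uniform-in-time moment bound for the batched dynamics and then correctly accounting for $\sigma(B^t-B^{t_{i-1}})$ so that the stated first-order rate in $\tau$ is not degraded — once both are in place the argument is a one-line integration, closed by a short Gr\"onwall step if any residual dependence of the drift on $u^s_p-u^{i-1}_p$ remains.
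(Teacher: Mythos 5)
Your route is the same as the paper's, which disposes of the proposition in a single sentence: ``the proposition can be proved directly by the boundedness of the kernel function.'' Writing the one-step displacement in integral form and bounding the batch interaction drift by $\|K\|_\infty$ and the external drift by $|\nabla V(x)|\le C(1+|x|^q)$ together with a uniform-in-time moment bound is exactly the intended argument, and that part of your proposal is complete and correct; the moment bound you invoke is indeed available from the RBM analysis cited for Proposition \ref{rbm}.

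Where you go beyond the paper is in flagging the Brownian increment, and you are right that this is the genuine weak point: with $\sigma\neq 0$ the displacement $u^t_p-u^{i-1}_p$ contains $\sigma\left(B^t-B^{t_{i-1}}\right)$, whose $L^2(\Omega)$ norm is of order $\sqrt{\tau}$, so the stated $O(\tau)$ bound cannot hold for the full displacement in the norm in which Proposition \ref{rbm} and Theorem \ref{rbmm} are phrased. The paper's proof simply does not address this. Your proposed reading --- that the estimate applies to the drift part of the one-step displacement, the noise being shared under the synchronous coupling --- is the only way to salvage the $O(\tau)$ rate, but note that it is not innocuous downstream: in the proof of Theorem \ref{rbmm} the term $\beta(1-\beta)\parallel u^{n}_{p1}-u^{n-1}_{p1}\parallel$ is taken to be $\beta(1-\beta)C\tau$, and if the Brownian contribution is retained it becomes $\beta(1-\beta)C\sqrt{\tau}$, which for $\beta\gtrsim 1/\sqrt{p-1}$ is no longer dominated by the leading term $C(1-\beta^2)\sqrt{\tau/(p-1)+\tau^2}$. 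So either the proposition must be stated for the drift only (with the stochastic part tracked separately and shown to cancel or be absorbed), or the error decomposition in Theorem \ref{rbmm} must be adjusted. Your proposal correctly isolates this obstacle; it does not fully close it, but neither does the paper.
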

The proposition can be proved directly by the boundedness of kernel function.

With the preliminary preparation, the error bounds of   RBM-M can be proved:
\begin{theorem}\label{rbmm}
Suppose \( V \) is strongly convex on \( \mathbb{R}^d \) so that \( x \mapsto V(x) - \frac{1}{2}|x|^2 \) is convex, and \( \nabla V \), \( \nabla^2 V \) have polynomial growth (i.e., \( |\nabla V(x)| + |\nabla^2 V(x)| \leq C(1 + |x|^q) \) for some \( q > 0 \)). Assume \( K(\cdot) \) is bounded, Lipschitz on \( \mathbb{R}^d \) with Lipschitz constant \( L \) and has bounded second order derivatives. Then the error of   RBM-M:
\begin{align} \label{error}
    \parallel E-\tilde{u}^{n}_p \parallel \le C(1-\beta^2)\sqrt{\frac{\tau}{p-1}+\tau^2}+C\beta (1-\beta)\tau+O(\beta^2) \lesssim
    C(1-\beta^2)\sqrt{\tau}.
\end{align}
\end{theorem}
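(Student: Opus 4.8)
The plan is to bound the error $\parallel E - \tilde{u}^n_p \parallel$ by exploiting the explicit exponential-weighted-average representation $\tilde{u}^n_p = \beta^n u^0_p + (1-\beta)\sum_{i=1}^n \beta^{n-i} u^i_p$ together with the two facts already available: Proposition \ref{rbm}, which controls $\parallel E - u^i_p \parallel$ (interpreted as the RBM error $\parallel Z^1 \parallel$) by $C\sqrt{\tau/(p-1)+\tau^2}$ uniformly in the iteration index, and Proposition \ref{proposition1}, which controls the drift of the RBM solution between adjacent time steps by $C\tau$. First I would write $E - \tilde{u}^n_p = \bigl(E - u^n_p\bigr) + \bigl(u^n_p - \tilde{u}^n_p\bigr)$, or more symmetrically insert $E$ into the weighted average via $E = \beta^n E + (1-\beta)\sum_{i=1}^n \beta^{n-i} E$ (using $\beta^n + (1-\beta)\sum_{i=1}^n \beta^{n-i} = 1$), so that
\begin{align*}
    E - \tilde{u}^n_p = \beta^n (E - u^0_p) + (1-\beta)\sum_{i=1}^n \beta^{n-i}(E - u^i_p).
\end{align*}

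Next I would estimate each piece. The term $\beta^n(E - u^0_p)$ is $O(\beta^n)$, hence absorbed into the $O(\beta^2)$ remainder once $n \ge 2$ (and at $i=0$ the data coincide anyway, so this is really an initialization effect). For the main sum, the naive triangle-inequality bound gives $(1-\beta)\sum_{i=1}^n \beta^{n-i}\cdot C\sqrt{\tau/(p-1)+\tau^2} \le C\sqrt{\tau/(p-1)+\tau^2}$, which already yields the crude bound on the right-hand side of \eqref{error}; but to get the sharper coefficient $C(1-\beta^2)$ and the separate $C\beta(1-\beta)\tau$ term I would split the sum by pulling out the $i=n$ term (the ``current time'' contribution, weight $1-\beta$) and the $i=n-1$ term (weight $\beta(1-\beta)$), and regroup: for $i=n$ we keep the RBM bound; for $i=n-1$ we write $E - u^{n-1}_p = (E - u^n_p) + (u^n_p - u^{n-1}_p)$ and apply Proposition \ref{proposition1} to the second summand to produce the $C\beta(1-\beta)\tau$ contribution, while the first summand recombines with the $i=n$ term; the remaining tail $\sum_{i=1}^{n-2}$ has total weight $(1-\beta)\sum_{i=1}^{n-2}\beta^{n-i} = \beta^2(1-\beta^{n-1})$, which is $O(\beta^2)$ and goes into the remainder. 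Collecting the $i=n$ and (recombined) $i=n-1$ RBM-error terms gives a coefficient $(1-\beta) + \beta(1-\beta) = 1-\beta^2$ on $C\sqrt{\tau/(p-1)+\tau^2}$, which is exactly the stated leading term; the final $\lesssim C(1-\beta^2)\sqrt{\tau}$ then follows by $\tau/(p-1)+\tau^2 \le C\tau$ for $p \ge 2$ and bounded $\tau$.

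The main obstacle I anticipate is not the algebra of the geometric weights but justifying that Proposition \ref{rbm}'s bound on the RBM error is uniform in $i$ and directly comparable to $\parallel E - u^i_p \parallel$ — i.e.\ identifying $Z^1(t)$ in Proposition \ref{rbm} with the quantity $E - u^i_p$ here and confirming the constant $C$ does not degrade with the number of steps (the supremum over $t \ge 0$ in Proposition \ref{rbm} is precisely what makes this work, so I would lean on that). A secondary subtlety is the meaning of ``$\parallel\cdot\parallel$'' (presumably a strong $L^2$ norm over the probability space, or a pathwise sup-in-time norm): the triangle inequality and the recombination step are valid for any norm, but the cross terms hidden in squaring an $L^2$ norm should be checked, or else one should argue entirely at the level of the norm without squaring. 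Finally, I would make explicit that the $O(\beta^2)$ bundles together the initialization term $\beta^n\parallel E-u^0_p\parallel$, the tail $\beta^2(1-\beta^{n-1})\cdot C\sqrt{\tau}$, and any higher-order cross terms, so that the displayed inequality \eqref{error} holds with a single constant $C$ depending only on $T$, $d$, $L$, $q$, and the bounds on $V$.
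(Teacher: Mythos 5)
Your proposal is correct and follows essentially the same route as the paper: insert $E$ into the exponential weights, isolate the $i=n$ and $i=n-1$ terms (total weight $1-\beta^2$), shift $u^{n-1}_p$ to $u^n_p$ via Proposition \ref{proposition1} to produce the $C\beta(1-\beta)\tau$ term, and dump the geometric tail into $O(\beta^2)$. If anything, your explicit accounting of the tail weight $\beta^2(1-\beta^{n-1})$ and the initialization term is more careful than the paper's ``high-order terms of $\beta$ can be almost ignored.''
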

\begin{proof}
Substitute the definition of $\tilde{u}^{n}_p$:
\begin{align*}
    \parallel E-\tilde{u}^{n}_p \parallel \le 
    \parallel E-(\beta^n u^0_p+(1-\beta)   \sum_{i=1}^{n} \beta^{n-i}  u^i_p) \parallel.
\end{align*}\\
$\beta$ is vary small so that the high-order terms of $\beta$ can be almost ignored:
\begin{align*}
    &\parallel E-(\beta^n u^0_p+(1-\beta)   \sum_{i=1}^{n} \beta^{n-i}  u^i_p) \parallel \\
    \le &\parallel (1-\beta^2)E-(1-\beta)\beta u^{n-1}_{p1}-(1-\beta) u^n_{p2} \parallel +O(\beta^2),
\end{align*}
where p1 and p2 are two different groups in RBM.\\
Let $u^{n-1}_{p1}$ continue running time $\tau$,considering $u^{n}_{p1}$:
\begin{align*}
    &\parallel (1-\beta^2)E-(1-\beta)\beta u^{n-1}_{p1}-(1-\beta) u^n_{p2} \parallel +O(\beta^2)  \\
    =&\parallel (1-\beta^2)E-(1-\beta)\beta u^{n}_{p1}-(1-\beta) u^n_{p2} +
    (1-\beta)\beta (u^{n}_{p1}-u^{n-1}_{p1})\parallel +O(\beta^2)\\
    \le&  \beta(1-\beta)\parallel E-u^{n}_{p1}\parallel +
    (1-\beta) \parallel E-u^{n}_{p2}\parallel+\beta (1-\beta) \parallel u^{n}_{p1}-u^{n-1}_{p1} \parallel+O(\beta^2).
\end{align*}
By proposition \ref{rbm}:
\begin{align*}
    \parallel E-u^{n}_{p1}\parallel \le C\sqrt{\frac{\tau}{p-1}+\tau^2},
\end{align*}

\begin{align*}
    \parallel E-u^{n}_{p2}\parallel \le C\sqrt{\frac{\tau}{p-1}+\tau^2}.
\end{align*}
By proposition \ref{proposition1}:
\begin{align*}
    \parallel u^{n}_{p1}-u^{n-1}_{p1} \parallel \le C\tau.
\end{align*}
Note that compare to $\tau$, $\tau^2$ is very small, so: 
\begin{align*}
    &\parallel E-\tilde{u}^{n}_p \parallel \\
    \le & C(1-\beta)\sqrt{\frac{\tau}{p-1}+\tau^2}+C\beta(1-\beta)\sqrt{\frac{\tau}{p-1}+\tau^2}+C\beta (1-\beta)\tau+O(\beta^2)\\
    \le &C(1-\beta^2)\sqrt{\frac{\tau}{p-1}+\tau^2}+C\beta (1-\beta)\tau+O(\beta^2)\\
    \lesssim & C(1-\beta^2)\sqrt{\tau}.
\end{align*}
\end{proof}

We have mathematically proven that the error of the   RBM-M \textbf{is smaller than} that of the RBM.

The error of the   RBM-M can be decomposed into two terms. The first term of error (\ref{error}) is:
\begin{align*}
    (1-\beta^2)\sqrt{\frac{\tau}{p-1}+\tau^2},
\end{align*}
which represents the main part of the error and corresponds to the error of the RBM. The second term of error (\ref{error}) is:
\begin{align*}
    \beta (1-\beta)\tau+O(\beta^2),
\end{align*}
which represents the additional error introduced by the momentum term.

Based on the form of the error expression, we hypothesize that the   RBM-M algorithm exhibits better performance when the kernel of the RBM has a relatively large singularity. The rationale is that when the kernel has a large singularity, the error of the RBM ($\sqrt{\frac{\tau}{p-1}+\tau^2}$) is significant. In this case, the effect caused by the coefficient ($1-\beta^2$) becomes more prominent. However, the additional term ($\beta (1-\beta)\tau+O(\beta^2)$) has little relevance to the singularity of the kernel. As long as $\beta$ becomes larger, this error term also becomes larger. Therefore, the overall error of the   RBM-M is a balanced result. When the singularity of the kernel is large, the advantages brought by the momentum correction outweigh the disadvantages caused by the additional error term, enabling the   RBM-M to achieve a more accurate solution. We plan to validate this hypothesis through numerical experiments in the subsequent analysis.

\subsection{Good statistical properties about variance and expectations}
On the other hand, we can explain the advantages of the   RBM-M algorithm from a probabilistic perspective. We first consider the situation of the RBM, for which the following theoretical analysis \cite{JIN2020108877} has been provided, with its expectation and variance respectively:
\begin{proposition}
Consider $p \ge 2$ and a given fixed $x = (x^{1}, \ldots, x^{N}) \in \mathbb{R}^d$. Denote:
\begin{align}\label{e1}
    X_{m,i}(x) = \frac{1}{p - 1} \sum_{j \in C_i, j \neq i} K(x^{i} - x^{j})-\frac{1}{N - 1} \sum_{j \neq i} K(x^{i} - x^{j}),
\end{align}
Then, for all $i$,
\begin{align}
    \mathbb{E}X_{m,i}(x) = 0,
\end{align}
where the expectation is taken with respect to the random division of batches. Moreover, the variance is given by
\begin{align}
    \text{Var}(X_{m,i}(x)) = \mathbb{E}[X_{m,i}(x)]^2 = \left( \frac{1}{p-1} - \frac{1}{N-1} \right) \Lambda_{i}(t),
\end{align}
where
\begin{align}
    \Lambda_{i}(t) = \frac{1}{N - 2} \sum_{j\neq i} \mid K(x^{i} - x^{j}) - \frac{1}{N - 1} \sum_{k\neq i}  K(x^{i} - x^{l})\mid^2.
\end{align}
\end{proposition}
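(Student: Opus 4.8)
The plan is to reduce the statement to the combinatorics of the random partition. For each $j\neq i$ introduce the indicator $\xi_j:=\mathbf{1}_{\{j\in C_i\}}$, recording whether particle $j$ is placed in the same batch as $i$; writing $K_j:=K(x^i-x^j)$ and $\bar K:=\frac{1}{N-1}\sum_{j\neq i}K_j$, the quantity in question is $X_{m,i}(x)=\frac{1}{p-1}\sum_{j\neq i}\xi_jK_j-\bar K$. Two deterministic facts drive the whole argument: $\xi_j^2=\xi_j$, and $\sum_{j\neq i}\xi_j=p-1$, since the batch containing $i$ always has exactly $p-1$ other members. Because, conditionally on $i$ lying in some batch, the remaining $p-1$ slots of that batch form a uniformly random $(p-1)$-subset of the other $N-1$ particles (recall $N=pn$), a direct count gives $\mathbb{E}\xi_j=\tfrac{p-1}{N-1}$ and, for $j\neq k$, $\mathbb{E}[\xi_j\xi_k]=\tfrac{(p-1)(p-2)}{(N-1)(N-2)}$. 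The vanishing of the expectation then follows in one line from $\mathbb{E}\xi_j=\tfrac{p-1}{N-1}$.

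For the variance I would first recenter: set $\eta_j:=\xi_j-\tfrac{p-1}{N-1}$, so $\mathbb{E}\eta_j=0$ and, crucially, $\sum_{j\neq i}\eta_j=0$. The latter lets one replace $K_j$ by $K_j-\bar K$ at no cost, giving $X_{m,i}(x)=\frac{1}{p-1}\sum_{j\neq i}\eta_j(K_j-\bar K)$. Expanding $|X_{m,i}(x)|^2$ and taking expectations splits into a diagonal term weighted by $\mathbb{E}\eta_j^2=\tfrac{p-1}{N-1}\cdot\tfrac{N-p}{N-1}$ and an off-diagonal term weighted by $\mathbb{E}\eta_j\eta_k$ for $j\neq k$. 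Rather than compute the latter by hand, I would extract it from the constraint: squaring $\sum_{j\neq i}\eta_j=0$, taking expectations, and using that the $\eta_j$ are exchangeable (the partition law is invariant under relabelling of particles $\neq i$) gives $\mathbb{E}\eta_j\eta_k=-\tfrac{1}{N-2}\,\mathbb{E}\eta_j^2$. Combined with the elementary identity $\sum_{j\neq k}(K_j-\bar K)\cdot(K_k-\bar K)=\big|\sum_{j\neq i}(K_j-\bar K)\big|^2-\sum_{j\neq i}|K_j-\bar K|^2=-\sum_{j\neq i}|K_j-\bar K|^2$, the two contributions collapse to a single multiple of $\sum_{j\neq i}|K_j-\bar K|^2$; substituting the value of $\mathbb{E}\eta_j^2$ and rewriting $\tfrac{N-p}{(p-1)(N-1)}=\tfrac{1}{p-1}-\tfrac{1}{N-1}$ yields exactly $\big(\tfrac{1}{p-1}-\tfrac{1}{N-1}\big)\Lambda_i(t)$ with $\Lambda_i(t)=\tfrac{1}{N-2}\sum_{j\neq i}|K_j-\bar K|^2$.

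The only genuine difficulty is bookkeeping: getting the pair-probability $\mathbb{E}[\xi_j\xi_k]$ correct and then carrying the $(p-1)$, $(N-1)$, $(N-2)$ factors cleanly through the cancellation. Exploiting $\sum_{j\neq i}\eta_j=0$ simultaneously on the probabilistic side (to obtain $\mathbb{E}\eta_j\eta_k$) and on the kernel side (to collapse the off-diagonal sum) is what keeps the computation short and avoids a messier direct evaluation of either cross term.
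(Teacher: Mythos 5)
Your proof is correct, and since the paper states this proposition without proof (it is quoted from the original RBM reference), your argument in fact supplies the standard derivation: the indicator/inclusion-probability computation with $\mathbb{E}\xi_j=\tfrac{p-1}{N-1}$ and $\mathbb{E}[\xi_j\xi_k]=\tfrac{(p-1)(p-2)}{(N-1)(N-2)}$ is exactly how the cited source establishes it. Your centering trick $\eta_j=\xi_j-\tfrac{p-1}{N-1}$ together with the constraint $\sum_{j\neq i}\eta_j=0$ is a clean way to organize the bookkeeping, and all the constants check out, including the final identity $\tfrac{N-p}{(p-1)(N-1)}=\tfrac{1}{p-1}-\tfrac{1}{N-1}$.
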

 
Then we consider   RBM-M, denoting:
\begin{align}
    \tilde{K}(x^i_0-x^j_0)=K(x^i_0-x^j_0),
\end{align}
\begin{align}
    \tilde{K}(x^i_n-x^j_n)=
\beta \tilde{K}(x^i_{n-1}-x^j_{n-1})+(1-\beta) K(x^i_n-x^j_n), \ \ \ (n\ge1).
\end{align}
while $x^i , x^j$ means different particles,subscript '$n$' means $n$-th time.\\
Let $\tilde{K}$ is the exponential weighted averaging of interacting particles term:
\begin{align*}
\tilde{K}^i_n=&\beta \tilde{K}_{n-1}+\frac{1-\beta}{p-1} \sum_{j\in C_n,j\neq i}{K}(x^i_{n}-x^j_{n})\\
=&\frac{\beta^n}{p-1} \sum_{j\in C_0,j\neq i}{K}(x^i_{0}-x^j_{0})+\frac{1-\beta}{p-1}   \sum_{s=1}^{n} \beta^{n-s}  \sum_{j\in C_s,j\neq i}{K}(x^i_{s}-x^j_{s}).
\end{align*}
while $C_s$ means different groups.\\
Define:
\begin{align}\label{e2}
    Y_{n,i}(x)=\tilde{K}^i_n-\frac{1}{N-1} \sum_{i \neq j}K(x^i_{n}-x^j_{n}).
\end{align}

Next, we calculate and prove some statistical properties of   RBM-M. The first is unbiaseness, which has the following theorem:
\begin{theorem}\label{the_mean}
The interacting particles term of   RBM-M is asymptotic unbiased estimation of true interacting particles term, which means that:
\begin{align}\label{mean}
    \lim\limits_{\tau \rightarrow 0 \atop \beta \rightarrow 0 } \mathbb{E}(Y_{n,i}(x))=0.
\end{align}
\end{theorem}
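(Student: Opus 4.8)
The plan is to decompose the expectation $\mathbb{E}(Y_{n,i}(x))$ using the explicit expansion of $\tilde{K}^i_n$ as an exponentially weighted sum, and then take the two limits $\tau\to 0$ and $\beta\to 0$ carefully. First I would substitute the closed form
\begin{align*}
\tilde{K}^i_n=\frac{\beta^n}{p-1}\sum_{j\in C_0,j\neq i}K(x^i_0-x^j_0)+\frac{1-\beta}{p-1}\sum_{s=1}^{n}\beta^{n-s}\sum_{j\in C_s,j\neq i}K(x^i_s-x^j_s)
\end{align*}
into the definition (\ref{e2}) of $Y_{n,i}(x)$. The key observation is that for each fixed time step $s$, the random batch division is independent of the configuration, so by the proposition on $X_{m,i}$ we have $\mathbb{E}\bigl[\frac{1}{p-1}\sum_{j\in C_s,j\neq i}K(x^i_s-x^j_s)\bigr]=\frac{1}{N-1}\sum_{j\neq i}K(x^i_s-x^j_s)$, i.e. each batch-averaged interaction is an unbiased estimator of the full-system interaction \emph{at its own time level}.

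Next I would use the algebraic identity $\beta^n+(1-\beta)\sum_{s=1}^n\beta^{n-s}=1$ to write the "target" full interaction $\frac{1}{N-1}\sum_{j\neq i}K(x^i_n-x^j_n)$ as the same convex combination of its values, so that after taking expectations the error becomes
\begin{align*}
\mathbb{E}(Y_{n,i}(x))=\beta^n\,\Delta_0+(1-\beta)\sum_{s=1}^{n}\beta^{n-s}\,\Delta_s,
\end{align*}
where $\Delta_s:=\frac{1}{N-1}\sum_{j\neq i}\bigl(K(x^i_s-x^j_s)-K(x^i_n-x^j_n)\bigr)$ measures how far the full-system interaction at step $s$ has drifted from that at step $n$. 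Then I would bound $\|\Delta_s\|$: since $K$ is Lipschitz with constant $L$ and the particle trajectories move by $O(\tau)$ per step (using boundedness of $K$ and of $\nabla V,\nabla^2 V$, exactly as in Proposition \ref{proposition1}), one gets $\|\Delta_s\|\le C L (n-s)\tau$. Summing, $\sum_{s=1}^n \beta^{n-s}(n-s)\tau \le C\tau/(1-\beta)^2$, so $(1-\beta)\sum_{s=1}^n\beta^{n-s}\|\Delta_s\|\le C\tau/(1-\beta)$, and the $\beta^n\Delta_0$ term is controlled by $\beta^n$ times a bound of order $T=n\tau$. Hence $\|\mathbb{E}(Y_{n,i}(x))\|\le C\tau/(1-\beta)+C\beta^n n\tau$, which tends to $0$ as $\tau\to0,\ \beta\to0$.

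The main obstacle I anticipate is handling the drift term $\Delta_s$ rigorously over the full horizon: the naive bound $\|\Delta_s\|\le CL(n-s)\tau$ has $n-s$ as large as $n=T/\tau$, so the product $(n-s)\tau$ is $O(T)$ rather than $O(\tau)$, and the decay must come entirely from the geometric weight $\beta^{n-s}$. One must be careful that the constant $C/(1-\beta)$ does not blow up — but since the theorem only claims the joint limit as $\beta\to 0$ (with $\beta<1$ fixed away from $1$ throughout, per the standing assumption $\beta<0.1$), $1/(1-\beta)$ stays bounded, so this is fine. A secondary subtlety is that $x^i_s$ here denotes the \emph{RBM-M} trajectory, not the true or RBM one, so the per-step displacement bound must be established for the RBM-M dynamics themselves; this follows from the boundedness of $K_{cor}$ (a convex combination of bounded quantities is bounded) and the polynomial-growth assumptions on $V$, together with an a priori moment bound on the trajectories as in the setup of Proposition \ref{rbm}. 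With these pieces in place the limit (\ref{mean}) follows.
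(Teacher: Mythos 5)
Your proposal is correct and follows essentially the same route as the paper's proof: expand $\tilde{K}^i_n$ as the exponentially weighted sum, use the per-time-level unbiasedness of the batch average, bound the drift between levels $s$ and $n$ by $C(n-s)\tau$ via Proposition \ref{proposition1} and the Lipschitz property of $K$, and sum the weighted series $\sum_m m\beta^m$ to get a bound of order $\beta\tau/(1-\beta)^2$ (your version drops the extra factor of $\beta$, which is harmless for the stated limit). Your remark that the per-step displacement bound must be verified for the RBM-M trajectories themselves is a point the paper treats only implicitly, but it does not change the argument.
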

\begin{proof}
By the mathematical definition of expectation, we know that
    \[ \mathbb{E}(X_{n,i}(x))=\mathbb{E}(\frac{1}{p-1}\sum_{j\in C,j\neq i}{K}(x^i-x^j)-
    \frac{1}{N-1}\sum_{j\neq i}{K}(x^i-x^j))=0.\]
Then we have:
\begin{align}\label{the2-1}
    \parallel \mathbb{E}(Y_{n,i}(x)) \parallel &\le \beta^n \parallel \mathbb{E}
    (\frac{1}{p-1}\sum_{j\in C_0,j\neq i}{K}(x^i_{0}-x^j_{0})-
    \frac{1}{N-1}\sum_{j\neq i}{K}(x^i_{n}-x^j_{n}))\parallel\\
    &+\cdots+\beta(1-\beta) \parallel \mathbb{E}(\frac{1}{p-1}\sum_{j\in C_{n-1},j\neq i}{K}(x^i_{n-1}-x^j_{n-1})-\frac{1}{N-1}\sum_{j\neq i}{K}(x^i_{n}-x^j_{n}))\parallel \nonumber \\
    &+(1-\beta) \parallel \mathbb{E}(\frac{1}{p-1}\sum_{j\in C_n,j\neq i}{K}(x^i_{n}-x^j_{n})-\frac{1}{N-1}\sum_{j\neq i}{K}(x^i_{n}-x^j_{n}))\parallel \nonumber \\
    &\le \frac{\beta^n}{N-1}\sum_{j\neq i} \parallel {K}(x^i_{0}-x^j_{0}))-{K}(x^i_{n}-x^j_{n}))\parallel \nonumber \\
    &+\cdots+\frac{\beta(1-\beta)}{N-1}\sum_{j\neq i} \parallel {K}(x^i_{n-1}-x^j_{n-1}))-{K}(x^i_{n}-x^j_{n}))\parallel \nonumber \\
    &+\frac{(1-\beta)}{N-1}\sum_{j\neq i} \parallel {K}(x^i_{n}-x^j_{n}))-{K}(x^i_{n}-x^j_{n}))\parallel \nonumber.
\end{align}
By proposition \ref{proposition1} and the $Lipschitz$ of K and $\parallel x\parallel$ is bounded,we can get it easily:
\begin{align}
    \label{the2-2}\parallel K(x^i_n-x^j_n)-K(x^i_0-x^j_n)\parallel \le Cn\tau.
\end{align}
Substitute the equation (\ref{the2-2}) into the equation (\ref{the2-1}):
\begin{align}\label{the2-3}
\parallel \mathbb{E}(Y_{n,i}(x)) \parallel &\le \frac{\beta^n}{N-1}C_1 n\tau+\frac{\beta^{n-1}(1-\beta)}{N-1}C_2 (n-1)\tau+\cdots+\frac{\beta(1-\beta)}{N-1}C_{n-1} \tau \nonumber\\
&\le \frac{C\tau}{N-1}(n\beta^n+(n-1)\beta^{n-1}+\cdots+\beta).
\end{align}
Using misalignment subtraction:
\begin{align}\label{the2-4}
    n\beta^n+(n-1)\beta^{n-1}+\cdots+\beta=
    \frac{\beta-\beta^{n+1}}{(1-\beta)^2}-\frac{n\beta^{n+1}}{1-\beta}.
\end{align}
Substitute the equation (\ref{the2-4}) into the equation (\ref{the2-3}):
\begin{align}\label{mean_esi}
    \parallel \mathbb{E}(Y_{n,i}(x)) \parallel \le \frac{C\tau}{N-1}
    (\frac{\beta-\beta^{n+1}}{(1-\beta)^2}-\frac{n\beta^{n+1}}{1-\beta}) \le \frac{C\beta \tau}{(1-\beta)^2(N-1)}.
\end{align}
So we can get:
\begin{align*}
    \lim\limits_{\tau \rightarrow 0 \atop \beta \rightarrow 0 } \mathbb{E}(Y_{n,i}(x))=0.
\end{align*}
\end{proof}

Then there are some properties of variance that prove that the variance of   RBM-M is more stable:
\begin{theorem}\label{the_var}
Assume when $t_1 \neq t_2 $, $K(x^i_{t1}-x^j_{t1})$ and $K(x^i_{t2}-x^j_{t2})$ are independent (it is reasonable for the re grouping of different time periods to be independent of each other), and $\tau \to 0, \beta \to 0$, then we have the following estimation formula:

\begin{align}\label{var}
    Var(Y_{n,i}(x))<(\frac{(1-\beta)^2}{1-\beta^2} +o(\beta^{2n+1})) Var(X_{n,i})+2\tau \frac{\beta^2}{(1-\beta^2)^2} \lesssim \frac{(1-\beta)^2}{1-\beta^2}Var(X_{n,i}),
\end{align}
while  $Var(X_{n,i})$ is RBM's variance.
\end{theorem}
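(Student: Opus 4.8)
The plan is to rewrite $Y_{n,i}(x)$ as a convex combination of single‑time batch fluctuations and then exploit the time–independence hypothesis so that the cross terms in the variance drop out. First I would use the exponential–weighted–average formula for $\tilde K^i_n$ already derived above, together with the fact that the weights $\beta^n$ and $(1-\beta)\beta^{\,n-s}$, $s=1,\dots,n$, sum to $1$, to absorb the subtracted full‑field term $\frac1{N-1}\sum_{j\ne i}K(x^i_n-x^j_n)$ into each summand. This writes $Y_{n,i}(x)=\sum_{s=0}^{n}w_s Z_s$ with $w_0=\beta^n$, $w_s=(1-\beta)\beta^{\,n-s}$ for $s\ge1$, and $Z_s=\frac1{p-1}\sum_{j\in C_s,j\ne i}K(x^i_s-x^j_s)-\frac1{N-1}\sum_{j\ne i}K(x^i_n-x^j_n)$.

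Next I would split $Z_s=X_{s,i}+D_s$, where $X_{s,i}$ is the genuine RBM batch fluctuation at step $s$ (so $\mathbb{E}X_{s,i}=0$ and its variance is the RBM variance at that step, which I treat as the common value $\mathrm{Var}(X_{n,i})$), and $D_s=\frac1{N-1}\sum_{j\ne i}\bigl(K(x^i_s-x^j_s)-K(x^i_n-x^j_n)\bigr)$ is a drift remainder. Using the Lipschitz bound on $K$, the regularity hypotheses of Proposition \ref{rbm}, and the $\sqrt\tau$ scale of the Brownian increments of the SDE, one obtains $\mathbb{E}\|D_s\|^2\le C(n-s)\tau$; writing $D_s$ as the telescoping sum of one‑step increments $\delta_r$, $r=s,\dots,n-1$, one also gets $\mathbb{E}\|\delta_r\|^2\le C\tau$.

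Then I would invoke the law of total variance conditional on the particle trajectory. Conditionally on the trajectory the $D_s$ are deterministic and the $X_{s,i}$ are mean zero and, by the stated independence of the regroupings at distinct times, uncorrelated across $s$, so the conditional variance collapses to $\sum_s w_s^2\,\mathrm{Var}(X_{s,i}\mid\mathrm{traj})$, whose expectation is at most $\bigl(\sum_s w_s^2\bigr)\mathrm{Var}(X_{n,i})$ (strictly less, in general, by the law of total variance applied to $X_{s,i}$ itself). A direct geometric‑series computation gives $\sum_{s=0}^n w_s^2=\frac{(1-\beta)^2}{1-\beta^2}+\frac{2\beta^{2n+1}}{1+\beta}=\frac{(1-\beta)^2}{1-\beta^2}+O(\beta^{2n+1})$, which produces the first term of \eqref{var}. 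For the second term, $\mathbb{E}[Y_{n,i}\mid\mathrm{traj}]=\sum_s w_s D_s$; swapping the order of summation, $\sum_s w_s\sum_{r=s}^{n-1}\delta_r=\sum_r\bigl(\sum_{s\le r}w_s\bigr)\delta_r=\sum_r\beta^{\,n-r}\delta_r$ since the partial weights telescope to $\beta^{\,n-r}$, and then $\mathbb{E}\|\delta_r\|^2\le C\tau$ with $\sum_{m\ge1}\beta^{2m}=\frac{\beta^2}{1-\beta^2}\le\frac{\beta^2}{(1-\beta^2)^2}$ yields $\mathrm{Var}\bigl(\mathbb{E}[Y_{n,i}\mid\mathrm{traj}]\bigr)\le 2\tau\frac{\beta^2}{(1-\beta^2)^2}$. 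Adding the two pieces gives \eqref{var}, and since the second term is $O(\tau\beta^2)$ while $\mathrm{Var}(X_{n,i})$ carries the dominant $\tau/(p-1)$ scale, the final $\lesssim$ bound follows.

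The main obstacle is the conditioning step: the batch selection at time $s$ influences the positions at every later time, so the $X_{s,i}$ are not literally independent of the trajectory and the clean decoupling of the conditional variance is really the "freshly redrawn batches" heuristic implicit in the independence hypothesis. Making it fully rigorous would require a filtration/martingale‑difference argument — verifying $\mathbb{E}[X_{s,i}\mid\mathcal F_s]=0$ step by step and controlling the residual correlations between $X_{s,i}$ and the later $\delta_r$, then folding them into the lower‑order $O(\beta^{2n+1})$ and $O(\tau\beta^2)$ remainders. A secondary nuisance is matching the constant $2/(1-\beta^2)^2$ in the statement against the $1/(1-\beta^2)$ that the clean estimate yields, which only needs $1-\beta^2\le1$ and a generous choice of the constant in $\mathbb{E}\|\delta_r\|^2\le C\tau$.
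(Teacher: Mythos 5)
Your proposal follows essentially the same route as the paper: expand $\tilde K^i_n$ as the exponentially weighted sum with weights $w_0=\beta^n$, $w_s=(1-\beta)\beta^{\,n-s}$, use the assumed independence of regroupings at distinct times to kill the cross terms, compute $\sum_s w_s^2=\frac{(1-\beta)^2}{1-\beta^2}+\frac{2\beta^{2n+1}}{1+\beta}$, and push the drift between times $s$ and $n$ into an $O(\beta^2\tau/(1-\beta^2)^2)$ remainder. Your law-of-total-variance packaging is a reorganization of the paper's direct computation of $\mathbb{E}|\tilde K^i_n|^2-(\mathbb{E}\tilde K^i_n)^2$, and your flagging of the conditioning issue is the same idealization the paper buries in its independence hypothesis.

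The one step you assert without justification is $\mathbb{E}\left[\mathrm{Var}(X_{s,i}\mid\mathrm{traj})\right]\le\mathrm{Var}(X_{n,i})$. The law of total variance only gives a bound by $\mathrm{Var}(X_{s,i})$, which involves $\Lambda_i(t_s)$ rather than $\Lambda_i(t_n)$; your drift term $D_s$ accounts for the change in the subtracted full-field reference but not for the change in the dispersion of the batch fluctuation itself between times $s$ and $n$. The paper closes exactly this point with the pointwise bound $\bigl|K(x^i_s-x^j_s)-\tfrac{1}{N-1}\sum_{k\neq i}K(x^i_s-x^k_s)\bigr|\le\bigl|K(x^i_n-x^j_n)-\tfrac{1}{N-1}\sum_{k\neq i}K(x^i_n-x^k_n)\bigr|+2(n-s)\tau$, obtained from the boundedness and Lipschitz continuity of $K$ together with Proposition \ref{proposition1}, which yields $\mathrm{Var}(X_{s,i})\le\mathrm{Var}(X_{n,i})+2C(n-s)\tau$; the resulting weighted sum $C\tau\sum_s w_s^2(n-s)$ lands in the same $\frac{2C\beta^2\tau}{(1-\beta^2)^2}$ bucket you already produce for $D_s$. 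Since the fix uses only the Lipschitz estimate you already invoke, this is a one-line repair rather than a structural flaw, and the rest of your argument matches the paper's.
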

\begin{proof}
    According to the definition of variance:
\begin{align*}
    Var(Y_{n,i})=\mathbb{E}(Y_{n,i}^2)-(\mathbb{E}(Y_{n,i}))^2,
\end{align*}
processing them separately:
\begin{align*}
    \mathbb{E}(Y_{n,i}^2)&=\mathbb{E}\mid \tilde{K}^i_n-\frac{1}{N-1} \sum_{i \neq j}K(x^i_{n}-x^j_{n})\mid ^2\\
    &=\mathbb{E}\mid \tilde{K}^i_n \mid ^2-\frac{2}{N-1}\sum_{i \neq j}K(x^i_{n}-x^j_{n}) \mathbb{E}\mid \tilde{K}^i_n \mid+\frac{1}{(N-1)^2}\sum_{i \neq j}K(x^i_{n}-x^j_{n})\sum_{i \neq k}K(x^i_{n}-x^k_{n}),
\end{align*}

\begin{align*}
    (\mathbb{E}(Y_{n,i}))^2&=(\mathbb{E} (\tilde{K}^i_n-\frac{1}{N-1} \sum_{i \neq j}K(x^i_{n}-x^j_{n}))) ^2\\
    &=(\mathbb{E} (\tilde{K}^i_n)) ^2-\frac{2}{N-1}\sum_{i \neq j}K(x^i_{n}-x^j_{n}) \mathbb{E}\mid \tilde{K}^i_n \mid+\frac{1}{(N-1)^2}\sum_{i \neq j}K(x^i_{n}-x^j_{n})\sum_{i \neq k}K(x^i_{n}-x^k_{n}).
\end{align*}
Rewrite the variance based on the two calculation of expectations:
\begin{align*}
    Var(Y_{n,i})&=\mathbb{E}\mid \tilde{K}^i_n \mid ^2-(\mathbb{E} (\tilde{K}^i_n)) ^2\\
    &=\mathbb{E}\mid \frac{\beta^n}{p-1} \sum_{j\in C_0,j\neq i}{K}(x^i_{0}-x^j_{0})+\frac{1-\beta}{p-1}   \sum_{s=1}^{n} \beta^{n-s}  \sum_{j\in C_s,j\neq i}{K}(x^i_{s}-x^j_{s})\mid^2\\
    &-(\mathbb{E}(\frac{\beta^n}{p-1} \sum_{j\in C_0,j\neq i}{K}(x^i_{0}-x^j_{0})+\frac{1-\beta}{p-1}   \sum_{s=1}^{n} \beta^{n-s}  \sum_{j\in C_s,j\neq i}{K}(x^i_{s}-x^j_{s})))^2.
\end{align*}
Use the independence of $K(x^i_{t1}-x^j_{t1})$ and $K(x^i_{t2}-x^j_{t2})$$(t_1 \neq t_2) $:
\begin{align*}
    Var(Y_{n,i})&=\frac{\beta^{2n}}{(p-1)^2}(\mathbb{E}\mid\sum_{j\in C_0,j\neq i}{K}(x^i_{0}-x^j_{0}) \mid^2-(\frac{1}{N-1}\sum_{j\neq i}{K}(x^i_{0}-x^j_{0}))^2)\\
    &+\frac{(1-\beta)^2}{(p-1)^2}   \sum_{s=1}^{n} \beta^{2(n-s)}  \sum_{j\in C_s,j\neq i}(E\mid\sum_{j\in C_0,j\neq i}{K}(x^i_{s}-x^j_{s})\mid^2-(\frac{1}{N-1}\sum_{j\neq i}{K}(x^i_{s}-x^j_{s}))^2).
\end{align*}
Denote \[
X_{s,i}(x) = \frac{1}{p - 1} \sum_{j \in C_i, j \neq i} K(x^{i}_s - x^{j}_s)-\frac{1}{N - 1} \sum_{j \neq i} K(x^{i}_s - x^{j}_s).
\]
Note that $X_{n,i}(x)$ is the variance of RBM, then:
\begin{align*}
    \mid Var(Y_{n,i})\mid&=\mid \beta^{2n}Var(X_{0,i})+\beta^{2(n-1)}(1-\beta)^2 Var(X_{1,i})+\cdots\\
    &+\beta^{2}(1-\beta)^2 Var(X_{n-1,i})+
    (1-\beta)^2 Var(X_{n,i})\mid,
\end{align*}
because $K(\cdot)$ is bound,so:
\begin{align*}
    \mid K(x^{i}_0 - x^{j}_0) - \frac{ 1}{N - 1} \sum_{k_2 \neq i} K(x^{i}_0 - x^{k_2}_0)\mid \le \mid K(x^{i}_n - x^{j}_n) - \frac{1}{N - 1} \sum_{k_1 \neq i} K(x^{i}_n - x^{k_1}_n)\mid+2n\tau,
\end{align*}
hence:
\begin{align*}
    Var(X_{0,i})\le Var(X_{n,i})+2n\tau C,
\end{align*}
add up the variances of all different time steps:
\begin{align*}
    Var(Y_{n,i})&<((1-\beta)^2+\beta^2(1-\beta)^2+\cdots+\beta^{2n-2}(1-\beta)^2+\beta^{2n})Var(X_{n,i})\\
    &+C\tau(2n\beta^{2n}+2(n-1)\beta^{2n-2}+\cdots+\beta^2(1-\beta)^2)\\
    &\le(\frac{(1-\beta)^2}{1-\beta^2}+\frac{2\beta^{2n+1}}{1+\beta})Var(X_{n,i})+\frac{2C\beta^2 \tau}{(1-\beta^2)^2}.
\end{align*}
So when $\tau \to 0$ and ignoring higher-order term of $\beta$,we can get:
\begin{align*}
    Var(Y_{n,i})<\frac{(1-\beta)^2}{1-\beta^2}Var(X_{n,i}).
\end{align*}
\end{proof}

Through the above theoretical analysis, we know that when the hyperparameter $\beta$ is appropriately selected, the error of the   RBM-M algorithm is smaller than that of the RBM, and it exhibits better properties in terms of statistics and probability. Next, we will conduct some numerical experiments to verify our theoretical conclusions from the numerical perspective.

\subsection{The error estimation of rescaled relative entropy}
The joint distribution of all particles simulated by   RBM-M ($\tilde{\rho} _t^N$) can be controlled by numerically solving the corresponding joint distribution (${\rho} _t^N$) for all particles. In order to prove this, the particle association law obtained by solving RBM ($\overline{\rho} _t^N$) is introduced as an auxiliary proof of the intermediate result. What this part needs to estimate is the error about the rescaled relative entropy between   RBM-M and reference solution is:
\begin{align}\label{kl}
    \mathcal{H}_N(\tilde{\rho} _t^N | {\rho}_t^N)= \frac{1}{N} \int_{\mathbb{R}^{N}} \tilde{\rho} _t^N \log\frac{\tilde{\rho}_t^N}{{\rho}_t^N} .
\end{align}

This part of the proof is mainly referenced from \cite{Huang2024MeanFE,Li2022ASU}, using the solution and corrected kernel of   RBM-M to replace the solution and kernel of RBM. 

For convenience, the velocity field $-\nabla V(x)$ is denoted as $b(x)$. For the joint distribution ${\rho} _t^N$ that N particle systems obey at time t, the Fokker-Planck equation is satisfied:
\begin{align} \label{fokker}
    \partial _t {\rho} _t^N+ \sum_{i=1}^{N}div_{x_i}({\rho} _t^N(b(x_i)+K* \rho_t (x_i))) =\sum_{i=1}^{N} \sigma \bigtriangleup _{x_i} {\rho} _t^N.
\end{align}

For the original RBM and   RBM-M solutions, Euler scheme is generally used to solve SDE to simulate the system evolution. 
In order to make the distribution smoother, the solution process is processed continuously and the following equation is obtained:
\begin{align}\label{continuous}
    X_t=X_{T_k}+(t-T_k)b(X_{T_k})+(t-T_k)K^{C_k}(X_{T_k})+\sqrt{2\sigma}(W_{T_{k+1}}-W_{T_k}), \ \ t \in (T_k, T_{k+1}).
\end{align}
In this case, t can continuously take values from $(T_k, T_{k+1})$ for the smooth simulation of each time. \\
For RBM,
\begin{align}
    K^{C_k}(X^i_t) : =\overline{K}_t^i = \frac{1}{p-1} \sum_{j \in C_k, j \ne i} K(X^i_t-X^j_t),
\end{align}
and for   RBM-M,
\begin{align}
    K^{C_k}(X^i_t) : =\tilde{K}_t^i = \frac{1}{p-1} \sum_{j \in C_k, j \ne i} K_{cor}(X^i_t-X^j_t)= \frac{1}{p-1} \sum_{j \in C_k, j \ne i} (\beta K(X^i_t-X^j_t)+(1-\beta) K_{cor}(X^i_{t-1}-X^j_{t-1})).
\end{align}
For continuously solved formula (\ref{continuous}), the continuous solution distributions obtained by RBM and   RBM-M are denoted as $\overline{\varrho } _t^N$ and $\tilde{\varrho }_t^N$. And for them, the Liouville equation holds\cite{Huang2024MeanFE}:
\begin{proposition}
    Denote by $\overline{\varrho}^{N,C}_{t}$ the probability density function of ${X}_{t} = \left( {X}^{1}_{t}, \ldots, {X}^{N}_{t} \right)$ for $t \in [T_{k}, T_{k+1})$. Then the following Liouville equation holds:  
\begin{equation} \label{liou} 
\partial_t \overline{\varrho}^{N,C}_{t} + \sum_{i=1}^{N} \text{div}_{x_{i}} \left( \overline{\varrho}^{N,C}_{t} \left( \overline{b}^{C,i}_{t}(x) + \overline{K}^{C,i}_{t}(x) \right) \right) = \sum_{i=1}^{N} \sigma \Delta_{x_{i}} \overline{\varrho}^{N,C}_{t},  
\end{equation}  
where  
\begin{equation}  
\overline{b}^{C,i}_{t}(x) = \mathbb{E} \left[ b \left( {X}^{i}_{T_{k}} \right) \mid {X}_{t} = x, C \right], \quad t \in [T_{k}, T_{k+1}), 
\end{equation}  
and  
\begin{equation}
\overline{K}^{C,i}_{t}(x) := \mathbb{E} \left[ \overline{K}_t^i \mid {X}_{t} = x, C \right], \quad t \in [T_{k}, T_{k+1}). 
\end{equation}  
Here, $x = (x_{1}, \ldots, x_{n}) \in \mathbb{R}^{Nd}$. 
\end{proposition}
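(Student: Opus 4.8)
The plan is to obtain the Liouville equation as the forward Kolmogorov equation of the interpolated process on each interval $[T_k,T_{k+1})$, exploiting that the drift there is \emph{frozen} at the left endpoint. First I would condition on the batch division $C$ once and for all, so that the sequence of batches is deterministic; then on $[T_k,T_{k+1})$ the process from \eqref{continuous} solves
\[
dX_t = \bigl(b(X_{T_k}) + K^{C_k}(X_{T_k})\bigr)\,dt + \sqrt{2\sigma}\,dW_t ,
\]
an SDE whose drift coefficient is $\mathcal{F}_{T_k}$-measurable (hence constant in the time variable $t$ on this interval) and whose noise is non-degenerate. Since $X_t$ is the sum of an $\mathcal{F}_{T_k}$-measurable random vector and an independent non-degenerate Gaussian of scale $\sqrt{t-T_k}$, its conditional law given $C$ admits a smooth density $\overline{\varrho}^{N,C}_t$; the polynomial-growth hypothesis on $\nabla V$, together with strong convexity of $V$ and boundedness of $K$, supply the uniform-in-$t$ moment bounds needed below.

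Next I would pass to the weak formulation. For a test function $\varphi \in C_c^\infty(\mathbb{R}^{Nd})$, applying It\^o's formula to $\varphi(X_t)$ and taking the conditional expectation given $C$ (the stochastic integral has zero mean since $\nabla\varphi$ is bounded with compact support) yields
\[
\frac{d}{dt}\,\mathbb{E}[\varphi(X_t)\mid C] = \mathbb{E}\Bigl[\sum_{i=1}^{N} \nabla_{x_i}\varphi(X_t)\cdot\bigl(b(X^i_{T_k}) + \overline{K}^i_{T_k}\bigr) + \sigma\sum_{i=1}^{N} \Delta_{x_i}\varphi(X_t)\;\Big|\;C\Bigr].
\]
The key step is that $b(X^i_{T_k})$ and $\overline{K}^i_{T_k}$ are not functions of the current state $X_t$, so I would invoke the tower property, conditioning on $\{X_t=x,\,C\}$, to replace them by $\overline{b}^{C,i}_t(x)$ and $\overline{K}^{C,i}_t(x)$ respectively; each term then becomes an integral against $\overline{\varrho}^{N,C}_t$. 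Integrating by parts in $x$ to move every derivative off $\varphi$ and onto the coefficient fields and the density, and using that $\varphi$ is arbitrary, gives
\[
\partial_t \overline{\varrho}^{N,C}_t + \sum_{i=1}^{N} \mathrm{div}_{x_i}\!\bigl(\overline{\varrho}^{N,C}_t(\overline{b}^{C,i}_t + \overline{K}^{C,i}_t)\bigr) = \sigma\sum_{i=1}^{N} \Delta_{x_i}\overline{\varrho}^{N,C}_t
\]
in the distributional sense, and by the assumed smoothness of $b$ and $K$ it holds classically. The identical computation, with $\overline{K}$ replaced by $\tilde{K}$ and $\overline{\varrho}$ by $\tilde{\varrho}$, produces the RBM-M version; here one only needs to note that the frozen kernel $\tilde{K}^i_{T_k}$ for RBM-M depends on the whole history $X_{T_0},\dots,X_{T_k}$, but it remains $\mathcal{F}_{T_k}$-measurable, so the argument is unchanged.

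The main obstacle I anticipate is the analytic justification of the conditioning-and-integration-by-parts step: one must verify that the conditional-expectation fields $\overline{b}^{C,i}_t$ and $\overline{K}^{C,i}_t$ are well-defined and that the products $(\overline{b}^{C,i}_t + \overline{K}^{C,i}_t)\,\overline{\varrho}^{N,C}_t$ are integrable with enough decay for the boundary terms to vanish. This is precisely where dissipativity from strong convexity of $V$, polynomial growth of $\nabla V$, and boundedness of $K$ are used, via uniform moment estimates on $X_t$; the Gaussian smoothing also guarantees $\overline{\varrho}^{N,C}_t$ is smooth, legitimizing the classical form of the equation. A secondary, mostly bookkeeping, point is to glue the per-interval identities into a single statement on $[0,T]$, which follows from the continuity of $t\mapsto X_t$ across the grid points $T_k$.
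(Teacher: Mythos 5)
Your derivation is correct: the paper itself does not reprove this proposition but imports it from \cite{Huang2024MeanFE}, and the argument given there is exactly the one you outline — freeze the drift at $T_k$ conditional on the batch sequence $C$, apply It\^o's formula to a compactly supported test function, use the tower property to replace the $\mathcal{F}_{T_k}$-measurable drift by its conditional expectation given $\{X_t=x,\,C\}$ (which is precisely what turns the non-Markovian frozen coefficients into the closed fields $\overline{b}^{C,i}_t$ and $\overline{K}^{C,i}_t$), and integrate by parts to obtain the weak form. You also correctly read the interpolation \eqref{continuous} with the increment $W_t-W_{T_k}$ rather than the (apparently typographical) $W_{T_{k+1}}-W_{T_k}$, and the frozen kernel $\overline{K}^i_{T_k}$ inside the conditional expectation, both of which are needed for the stated equation to hold.
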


In the same way, using the same proof method, we can get the following lemma of   RBM-M:
\begin{lemma}
    Denote by $\tilde{\varrho}^{N,C}_{t}$ the probability density function of ${X}_{t} = \left( {X}^{1}_{t}, \ldots, {X}^{N}_{t} \right)$ for $t \in [T_{k}, T_{k+1})$. Then the following Liouville equation holds:  
\begin{equation}  
\partial_t \tilde{\varrho}^{N,C}_{t} + \sum_{i=1}^{N} \text{div}_{x_{i}} \left( \tilde{\varrho}^{N,C}_{t} \left( \tilde{b}^{C,i}_{t}(x) + \overline{\tilde{K}^{C,i}_{t}(x)} \right) \right) = \sum_{i=1}^{N} \sigma \Delta_{x_{i}} \tilde{\varrho}^{N,C}_{t},  
\end{equation}  
where  
\begin{equation}  
\tilde{b}^{C,i}_{t}(x) = \mathbb{E} \left[ b \left( {X}^{i}_{T_{k}} \right) \mid {X}_{t} = x, C \right], \quad t \in [T_{k}, T_{k+1}), 
\end{equation}  
and  
\begin{equation}
\overline{\tilde{K}^{C,i}_{t}(x)} := \mathbb{E} \left[ \tilde{K}_t^i \mid {X}_{t} = x, C \right], \quad t \in [T_{k}, T_{k+1}). 
\end{equation}  
\end{lemma}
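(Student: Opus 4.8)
The plan is to mirror the derivation of the preceding Proposition for RBM, the only structural change being that the frozen interaction coefficient now carries the momentum history. Fix a realization of the random batch sequence $C = (C_0, C_1, \dots)$. On each subinterval $[T_k, T_{k+1})$ the continuous interpolation \eqref{continuous} shows that, conditionally on $\mathcal{F}_{T_k}$ and on $C$, the drift of $t \mapsto X_t$ is constant in $t$, equal to $b(X_{T_k}^i) + \tilde{K}_{T_k}^i$ in the $i$-th block, where $\tilde{K}_{T_k}^i = \frac{1}{p-1}\sum_{j\in C_k, j\neq i}K_{cor}(X^i_{T_k}-X^j_{T_k})$ is a (deterministic, once $C$ is fixed) function of the discrete history $X_{T_0}, \dots, X_{T_k}$, while the Brownian increment $W_t - W_{T_k}$ is independent of $\mathcal{F}_{T_k}$. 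This places us in exactly the setting of the RBM Proposition.

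First I would take a test function $\varphi \in C_c^\infty(\mathbb{R}^{Nd})$, apply It\^o's formula to $\varphi(X_t)$ on $[T_k, T_{k+1})$, and take expectations conditioned on $C$; since the stochastic integral term has zero conditional expectation, this gives
\begin{align*}
\frac{d}{dt}\,\mathbb{E}\big[\varphi(X_t)\mid C\big] = \mathbb{E}\Big[\sum_{i=1}^N \big(b(X_{T_k}^i) + \tilde{K}_{T_k}^i\big)\cdot\nabla_{x_i}\varphi(X_t) + \sigma\sum_{i=1}^N \Delta_{x_i}\varphi(X_t)\;\Big|\;C\Big].
\end{align*}
Next I would invoke the tower property to condition further on $\{X_t = x\}$: by definition of $\tilde{\varrho}^{N,C}_t$ as the conditional density of $X_t$ given $C$, the right-hand side equals $\int_{\mathbb{R}^{Nd}} \sum_i \big(\tilde{b}^{C,i}_{t}(x) + \overline{\tilde{K}^{C,i}_{t}(x)}\big)\cdot\nabla_{x_i}\varphi(x)\,\tilde{\varrho}^{N,C}_t(x)\,dx + \sigma\int_{\mathbb{R}^{Nd}} \sum_i \Delta_{x_i}\varphi(x)\,\tilde{\varrho}^{N,C}_t(x)\,dx$, with $\tilde{b}^{C,i}_{t}(x) = \mathbb{E}[b(X^i_{T_k})\mid X_t = x, C]$ and $\overline{\tilde{K}^{C,i}_{t}(x)} = \mathbb{E}[\tilde{K}^i_{T_k}\mid X_t = x, C]$, precisely the quantities named in the lemma. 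Finally, integrating by parts in $x$ and using that $\varphi$ is arbitrary yields the Liouville equation in the sense of distributions (equivalently in the stated strong form wherever $\tilde{\varrho}^{N,C}_t$ is regular enough); continuity of $t \mapsto X_t$ forces $\tilde{\varrho}^{N,C}_t$ to match across each endpoint $t = T_{k+1}$, so the equation holds on all of $[0,T]$.

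The main obstacle is conceptual rather than computational. Unlike plain RBM, the RBM-M drift $\tilde{K}^i_{T_k}$ is not a function of $X_{T_k}$ alone but of the entire discrete history, through the recursion $K_{cor} = \beta\,K_{cor}^{\mathrm{prev}} + (1-\beta)\,K$, so the position vector $X_t$ by itself is not Markov. The key point to argue carefully is that this does not break the derivation: once we condition on the full batch sequence $C$ and then on $\{X_t = x\}$, the entire history dependence is absorbed into the conditional expectation defining $\overline{\tilde{K}^{C,i}_{t}(x)}$, which is a genuine measurable vector field in $x$, and the interval-by-interval argument above only ever uses independence of the Brownian increment from $\mathcal{F}_{T_k}$. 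I would also note that $\tilde{b}^{C,i}_{t}$ and $\overline{\tilde{K}^{C,i}_{t}}$ need only be defined $\tilde{\varrho}^{N,C}_t$-a.e.\ for the divergence-form equation to make sense, with any additional regularity deferred to the subsequent relative-entropy estimate.
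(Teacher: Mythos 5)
Your proposal is correct and follows essentially the same route the paper intends: it explicitly states that the lemma is obtained "using the same proof method" as the RBM Liouville proposition of the cited reference, i.e., freezing the drift on each subinterval $[T_k,T_{k+1})$, applying It\^o's formula to a test function, conditioning on $C$ and then on $\{X_t=x\}$ via the tower property, and integrating by parts. Your explicit handling of the history dependence of $\tilde{K}^i_{T_k}$ (absorbed into the conditional expectation once $C$ is fixed) is the one point the paper leaves implicit, and you resolve it correctly.
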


To prove (\ref{kl}), let's start with Fisher information, which is defined by:
\begin{align}
    \mathcal{I} (\rho)=\int \rho |\nabla\rho|^2 dx,
\end{align}
And, we know that Fisher information statisfies the following inequality:
\begin{align} \label{fisher_inq}
    \frac{1}{\mathcal{I}(p*q)} \ge  \frac{1}{\mathcal{I}(p)}+\frac{1}{\mathcal{I}(q)} ,
\end{align}

Define a definite mapping:
\begin{align} \label{x_map}
    \tilde{\psi}^{C_k}_{\tau}(\mathbf{x}) := \mathbf{x} + \tau \left( \mathbf{b(x)} + \mathbf{K^{C_k}(x)} \right),  
\end{align}
where $\mathbf{x}, \mathbf{b(x)}, \mathbf{K^{C_k}(x)}$ represents the union form of all corresponding variables  respectively,  i.e
\begin{align*}
    &\mathbf{x} = (x_1, \cdots, x_n) \in \mathbb{R}^{Nd},\\
    &\mathbf{b(x)} = (b(x_1), \cdots, b(x_n))^T \in \mathbb{R}^{Nd},  \\
    &\mathbf{\tilde{K}^{C_k}(x)} = (\tilde{K}^{C_k}(x_1), \cdots, \tilde{K}^{C_k}(x_n))^T \in \mathbb{R}^{Nd}.
\end{align*}

Let $\tilde{p}_k(\cdot)$ be the desity of random variable $\tilde{\mathbf{Z}}_k=\tilde{\psi}^{C_k}_{\tau}(\mathbf{X_k})$, then through the conversion of variables, we can get that:
\begin{align*}
    \tilde{p}_k(\mathbf{z})=\frac{\tilde{\varrho}^{N,C}_{T_k}(\mathbf{x})}{det(\nabla \tilde{\psi}^{C_k}_{\tau} \mathbf{x})}.
\end{align*}

Let $q_\tau$ denote the Nd-dimensional Gaussian distribution $\mathcal{N}\left(0,2 \sigma \tau \mathbf{I}_{N d}\right)$. By (\ref{continuous}) and (\ref{x_map}), we have that $\tilde{\varrho}^{N,C}_{T_k}(\mathbf{x})=p_k(\mathbf{x})*q_\tau$. Note that for   RBM-M, the velocity field is the same as for RBM. Moreover, if $K(\cdot)$ is Lipshitz, then $K_{cor}(\cdot)$ is also Lipshitz. Then we have the following estimate for $\mathcal{I}\left(\tilde{\varrho}_{T_{k}}^{N, C}\right)$ upper bound:
\begin{align}
    \mathcal{I}\left(\tilde{\varrho}_{T_{k}}^{N, C}\right) \leq \max \left(\mathcal{I}\left(\rho_{0}^{N}\right), \frac{1+\tau(r+L)}{(1-\tau(r+L))^{2}} M N, \frac{N d(r+L)(3+\tau(r+L))}{2 \sigma}\right) .
\end{align}

For the specific proof process, see \cite{Huang2024MeanFE}.  Recall (\ref{continuous}), the $\tilde{\varrho}_{t}^{N, C}$  is the probability density of the RBM for a given sequence of batches  $\boldsymbol{C}:=\left(C_{0}, C_{1}, \cdots, C_{k}, \cdots\right)$ , so that  $\tilde{\rho}_{T_{k}}^{N}=\mathbb{E}_{\boldsymbol{C}}\left[\tilde{\varrho}_{T_{k}}^{N, \boldsymbol{C}}\right]$ . Moreover, by the Markov property, we are able to define
\begin{align*}
   \tilde{\rho}_{t}^{N, C_{k}}:=\mathbb{E}\left[\tilde{\varrho}_{t}^{N, \boldsymbol{C}} \mid C_{i}, i \geq k\right], \quad t \in\left[T_{k}, T_{k+1}\right).
\end{align*}

Next is a concrete analysis of (\ref{kl}), which is calculated the time derivative.
\begin{theorem}
    Suppose:\\
    (a)the field $b$: $\mathbb{R}^d \to \mathbb{R}^d$ and the interaction kernel $K(\cdot)$ are Lipshitz,\\
    (b)the Hessians of  $b$  and  $K$  have at most polynomial growth:
    \begin{align*}
\left|\nabla^{2} b(x)\right| \leq C(1+|x|)^{q}, \quad\left|\nabla^{2} K(x)\right| \leq \tilde{C}(1+|x|)^{q} .
    \end{align*}
    (c)$\overline{K}^{C,i}-F$  is uniformly bounded, where:
    \begin{align*}
        &F\left(x_{i}\right)=\frac{1}{N} \sum_{j=1}^{N} K\left(x_{i}-x_{j}\right), \overline{K}^{C,i}_{t}(x)=\frac{1}{p} \sum_{j\in C_i} K\left(x_{i}-x_{j}\right), \\
    &\operatorname{esssup}_{C}\left\|\overline{K}^{C,i}_{t}-F\right\|_{L^{\infty}\left(\mathbb{R}^{d}\right)}<+\infty,
    \end{align*}
    then, we have,
    \begin{align*}
    \sup_t\mathcal{H}_{N}\left(\tilde{\rho}_{t}^{N} \mid \rho_{t}^{N}\right) \le \mathcal{H}_{N}\left(\tilde{\rho}_{t}^{0} \mid \rho_{t}^{0}\right)+C_1 \tau^2 +\frac{C_2}{N}.
\end{align*}
\end{theorem}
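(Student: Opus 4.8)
The plan is to follow the relative-entropy / Fisher-information strategy of \cite{Huang2024MeanFE}, carried over verbatim to the momentum-corrected setting, using the intermediate RBM law $\overline{\rho}_t^N$ as a bridge between $\tilde{\rho}_t^N$ and $\rho_t^N$. First I would split the target quantity by the triangle-type inequality for relative entropy (or simply by adding and subtracting),
\begin{align*}
\mathcal{H}_N\big(\tilde{\rho}_t^N \mid \rho_t^N\big) \le 2\,\mathcal{H}_N\big(\tilde{\rho}_t^N \mid \overline{\rho}_t^N\big) + 2\,\mathcal{H}_N\big(\overline{\rho}_t^N \mid \rho_t^N\big),
\end{align*}
so that the second term is exactly the RBM-versus-reference bound already available from \cite{Huang2024MeanFE} (it contributes the $C_1\tau^2 + C_2/N$ and the initial term), and all the new work concentrates on bounding $\mathcal{H}_N(\tilde{\rho}_t^N \mid \overline{\rho}_t^N)$, the discrepancy between the momentum-corrected dynamics and the plain RBM dynamics for the \emph{same} batch sequence.

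Next I would differentiate $\mathcal{H}_N(\tilde{\rho}_t^N \mid \overline{\rho}_t^N)$ in time. Using the Liouville equations for $\tilde{\varrho}_t^{N,C}$ and $\overline{\varrho}_t^{N,C}$ (the lemma and proposition stated above), together with the Fokker–Planck structure, the standard computation gives a dissipation term proportional to a relative Fisher information plus a source term controlled by the $L^2(\tilde{\rho})$-norm of the difference of the drift fields. Since the velocity field $b = -\nabla V$ is identical for both schemes, the only drift mismatch comes from the interaction terms, i.e. from $\overline{\tilde K}^{C,i}_t - \overline{K}^{C,i}_t$. By the definition of $K_{cor}$ this difference telescopes into $\beta$ times a sum of increments $K(X^i_{T_{k-1}}-X^j_{T_{k-1}}) - K(X^i_{T_k}-X^j_{T_k})$ weighted by powers of $\beta$; using the Lipschitz bound on $K$, Proposition~\ref{proposition1} (consecutive-step bound $\le C\tau$), and summing the geometric series $\sum_\ell \ell\beta^\ell \le \beta/(1-\beta)^2$, this mismatch is $O(\beta\tau)$, hence its square integrates against $\tilde\rho$ to something of order $\beta^2\tau^2$, absorbed into the $C_1\tau^2$ term. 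Then a Gronwall argument in time — exactly as in \cite{Huang2024MeanFE}, using the logarithmic-Sobolev / Fisher-information control whose ingredient is the upper bound on $\mathcal{I}(\tilde{\varrho}_{T_k}^{N,C})$ displayed just above — closes the estimate with a constant uniform in $t$.

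The main obstacle I anticipate is making the Gronwall step genuinely uniform in $t$ for the corrected scheme: one needs the relative Fisher information produced by the dissipation to dominate the accumulated source terms at every step, and this requires the a priori Fisher-information bound on $\tilde{\varrho}_{T_k}^{N,C}$ to hold with constants independent of $k$ — which in turn relies on $K_{cor}$ being Lipschitz with a constant no worse than that of $K$ (true, since $K_{cor}$ is a convex combination of Lipschitz maps) and on the strong convexity of $V$ providing the contraction. A secondary technical point is handling the "one-step-behind" nature of $K_{cor}$, which means $\overline{\tilde K}^{C,i}_t$ depends on particle positions at $T_{k-1}$ as well as $T_k$; this forces the conditional-expectation manipulations (defining $\tilde\rho_t^{N,C_k}$ via the Markov property) to be done with respect to an enlarged filtration, but since each increment is still $O(\tau)$ and weighted by $\beta$, the extra terms remain higher order and do not affect the stated bound.
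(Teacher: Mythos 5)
There is a genuine gap at the very first step. Relative entropy does not satisfy a triangle inequality, not even one weakened by a multiplicative constant: there is no general bound of the form $\mathcal{H}_N(\tilde{\rho}\mid\rho)\le 2\mathcal{H}_N(\tilde{\rho}\mid\overline{\rho})+2\mathcal{H}_N(\overline{\rho}\mid\rho)$. Nor does ``adding and subtracting'' rescue it: writing $\log\frac{\tilde{\rho}}{\rho}=\log\frac{\tilde{\rho}}{\overline{\rho}}+\log\frac{\overline{\rho}}{\rho}$ gives
\begin{align*}
\mathcal{H}_N\big(\tilde{\rho}_t^N\mid\rho_t^N\big)=\mathcal{H}_N\big(\tilde{\rho}_t^N\mid\overline{\rho}_t^N\big)+\frac{1}{N}\int\tilde{\rho}_t^N\log\frac{\overline{\rho}_t^N}{\rho_t^N}\,d\mathbf{x},
\end{align*}
and the second term is a cross term integrated against $\tilde{\rho}_t^N$ rather than $\overline{\rho}_t^N$; it is not $\mathcal{H}_N(\overline{\rho}_t^N\mid\rho_t^N)$ and cannot be controlled by the known RBM-versus-reference bound without additional (and here unavailable) uniform density-ratio assumptions. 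So the plan of importing the $C_1\tau^2+C_2/N$ bound wholesale from the RBM analysis and only estimating $\mathcal{H}_N(\tilde{\rho}\mid\overline{\rho})$ does not assemble into the stated conclusion.

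The paper avoids this entirely by differentiating $\mathcal{H}_N(\tilde{\rho}_t^N\mid\rho_t^N)$ directly and performing the add-and-subtract at the level of the drift fields inside the integrand (inserting $\tilde{K}_t^i$ and $F(x_i)$), where telescoping is legitimate. The genuinely new difficulty is then that, unlike for RBM, $\mathbb{E}_{C_k}[\tilde{K}_t^i]$ is no longer an unbiased estimator of $F(x_i)$; the resulting bias term is split off via an independent copy $\tilde{C}_k$ of the batch and controlled by the mean estimate (\ref{mean_esi}) of Theorem \ref{the_mean}, contributing the $c\beta^2\tau^2$. Your remaining machinery --- the Liouville equations, the Lipschitz/telescoping estimate showing the momentum correction perturbs the drift by $O(\beta\tau)$, the Fisher-information bound on $\tilde{\varrho}_{T_k}^{N,C}$, the log-Sobolev inequality, and Gr\"onwall --- is sound and closely parallels the paper's $J_1,\dots,J_4$ decomposition, and your observation that $K_{cor}$ remains Lipschitz is exactly what the paper uses. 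But you need to restructure the opening so that the comparison to the reference law is done inside a single entropy-dissipation computation rather than through a chain of relative entropies.
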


\begin{proof}
    First we switch the order of integration and derivation, then replace $\partial_t \tilde{\rho}_{t}^{N}$ and $\partial_t \rho_{t}^{N}$ with (\ref{liou}) and (\ref{fokker}), finally we use the Green formula:
\begin{align*}
& \frac{d}{d t} \mathcal{H}_{N}\left(\tilde{\rho}_{t}^{N} \mid \rho_{t}^{N}\right)=\frac{1}{N} \frac{d (\int_{\mathbb{R}^{N}} \tilde{\rho} _t^N \log\frac{\tilde{\rho}_t^N}{{\rho}_t^N})}{dt} \\
=& \frac{1}{N} \int_{\mathbb{R}^{N d}} \partial_t \tilde{\rho}_{t}^{N} log\frac{\tilde{\rho}_{t}^{N}}{\rho_{t}^{N}} d\mathbf{x} +\frac{1}{N} \int_{\mathbb{R}^{N d}} \tilde{\rho}_{t}^{N}(\frac{\partial_t \tilde{\rho}_{t}^{N}}{\tilde{\rho}_{t}^{N}}-\frac{\partial_t \rho_{t}^{N}}{\tilde{\rho}_{t}^{N}}) d\mathbf{x} \\
=&\frac{1}{N} \int_{\mathbb{R}^{N d}}\left(\partial_{t} \tilde{\rho}_{t}^{N}\right)\left(\log \frac{\tilde{\rho}_{t}^{N}}{\rho_{t}^{N}}+1\right) d \mathbf{x}+\frac{1}{N} \int_{\mathbb{R}^{N d}}\left(\partial_{t} \rho_{t}^{N}\right)\left(-\frac{\tilde{\rho}_{t}^{N}}{\rho_{t}^{N}}\right) d \mathbf{x} \\
= & \frac{1}{N} \sum_{i=1}^{N} \int_{\mathbb{R}^{N d}} -\left(\mathbb{E}_{C_{k}} \left[div_{x_i}\left(\tilde{\rho}_{t}^{N, C_{k}}\left(\tilde{b}_{t}^{C_k ,i}(\mathbf{x})+\overline{\tilde{K}_{t}^{C_{k}, i}}(\mathbf{x})\right)\right)-\sigma \bigtriangleup_{x_{i}} \tilde{\rho}_{t}^{N} \right]\right) \left(\log \frac{\tilde{\rho}_{t}^{N}}{\rho_{t}^{N}}+1\right) d \mathbf{x} \\
& +\frac{1}{N} \sum_{i=1}^{N} \int_{\mathbb{R}^{N d}}\left(-div_{x_i} \left(\rho_{t}^{N}\left(b\left(x_{i}\right)+K * \rho_{t}\left(x_{i}\right)\right)\right)+\sigma \bigtriangleup_{x_{i}} \rho_{t}^{N}\right) \left(-\frac{\tilde{\rho}_{t}^{N}}{\rho_{t}^{N}}\right) d \mathbf{x}\\
= & \frac{1}{N} \sum_{i=1}^{N} \int_{\mathbb{R}^{N d}}\left(\mathbb{E}_{C_{k}}\left(\tilde{\rho}_{t}^{N, C_{k}}\left(\tilde{b}_{t}^{C_k ,i}(\mathbf{x})+\overline{\tilde{K}_{t}^{C_{k}, i}}(\mathbf{x})\right)\right)-\sigma \operatorname{div}_{x_{i}} \tilde{\rho}_{t}^{N}\right) \cdot\left(\nabla_{x_{i}} \log \frac{\tilde{\rho}_{t}^{N}}{\rho_{t}^{N}}\right) d \mathbf{x} \\
& +\frac{1}{N} \sum_{i=1}^{N} \int_{\mathbb{R}^{N d}}\left(\rho_{t}^{N}\left(b\left(x_{i}\right)+K * \rho_{t}\left(x_{i}\right)\right)-\sigma \operatorname{div}_{x_{i}} \rho_{t}^{N}\right) \cdot\left(-\nabla_{x_{i}} \frac{\tilde{\rho}_{t}^{N}}{\rho_{t}^{N}}\right) d \mathbf{x} .
\end{align*}

Taking note of $\nabla_{x_i}\log \frac{\tilde{\rho}_{t}^{N}}{\rho_{t}^{N}}=\frac{\rho_{t}^{N}}{\tilde{\rho}_{t}^{N}}\nabla_{x_{i}} \frac{\tilde{\rho}_{t}^{N}}{\rho_{t}^{N}}$ and introducing auxiliary variables $\tilde{K}_t^i$ and $F(x_i)=\frac{1}{N} \sum_{j \ne i}K(x_i-x_j)$ , the above equation is equivalent to
\begin{align}\label{j12345}
&\frac{d}{d t} \mathcal{H}_{N}\left(\tilde{\rho}_{t}^{N} \mid \rho_{t}^{N}\right)=\frac{1}{N} \sum_{i=1}^{N} \int_{\mathbb{R}^{N d}} \mathbb{E}_{C_{k}}\left(\tilde{\rho}_{t}^{N, C_{k}}\left(\tilde{b}_{t}^{C_{k}, i}(\mathbf{x})-b\left(x_{i}\right)\right)\right) \cdot \nabla_{x_{i}} \log \frac{\tilde{\rho}_{t}^{N}}{\rho_{t}^{N}} d \mathbf{x} \\
&+\frac{1}{N} \sum_{i=1}^{N} \int_{\mathbb{R}^{N d}} \mathbb{E}_{C_{k}}\left(\tilde{\rho}_{t}^{N, C_{k}} \overline{\tilde{K}_{t}^{C_{k}, i}}(\mathbf{x})-\tilde{\rho}_{t}^{N, C_{k}} \tilde{K}_t^i\left(x_{i}\right)\right) \cdot \nabla_{x_{i}} \log \frac{\tilde{\rho}_{t}^{N}}{\rho_{t}^{N}} d \mathbf{x} \\
&+\frac{1}{N} \sum_{i=1}^{N} \int_{\mathbb{R}^{N d}} \mathbb{E}_{C_{k}}\left(\tilde{\rho}_{t}^{N, C_{k}} \tilde{K}_t^i\left(x_{i}\right)-\tilde{\rho}_{t}^{N} F\left(x_{i}\right)\right) \cdot \nabla_{x_{i}} \log \frac{\tilde{\rho}_{t}^{N}}{\rho_{t}^{N}} d \mathbf{x} \\
&+\frac{1}{N} \sum_{i=1}^{N} \int_{\mathbb{R}^{N d}}\left(F\left(x_{i}\right)-K * \rho_{t}\left(x_{i}\right)\right) \tilde{\rho}_{t}^{N} \cdot \nabla_{x_{i}} \log \frac{\tilde{\rho}_{t}^{N}}{\rho_{t}^{N}} d \mathbf{x} \\
&-\frac{\sigma}{N} \sum_{i=1}^{N} \int_{\mathbb{R}^{N d}} \tilde{\rho}_{t}^{N}\left|\nabla_{x_{i}} \log \frac{\tilde{\rho}_{t}^{N}}{\rho_{t}^{N}}\right|^{2} d \mathbf{x} .
\end{align}

Among them, unlike \cite{Huang2024MeanFE}, there is no $\mathbb{E}_{C} \left(\tilde{K_t^i}\right)= F(x_i)$, i.e.  $\mathbb{E}_{C} \left(\tilde{K_t^i}\right)$ is no longer an unbiased estimate of $F(x_i)$, so extra processing is required here. Consider another batch $\tilde{C_k}$ independent of $C_k$, so,
\begin{align*}
    &\frac{1}{N} \sum_{i=1}^{N} \int_{\mathbb{R}^{N d}} \mathbb{E}_{C_{k}}\left(\tilde{\rho}_{t}^{N, C_{k}} \tilde{K}_t^i\left(x_{i}\right)-\tilde{\rho}_{t}^{N} F\left(x_{i}\right)\right) \cdot \nabla_{x_{i}} \log \frac{\tilde{\rho}_{t}^{N}}{\rho_{t}^{N}} d \mathbf{x}\\
    =&\frac{1}{N} \sum_{i=1}^{N} \int_{\mathbb{R}^{N d}} \left(\mathbb{E}_{C_{k},\tilde{C_{k}}}\left[\left(\tilde{\rho}_{t}^{N, C_{k}} -\tilde{\rho}_{t}^{N,\tilde{C_{k}}} \right)\left(\tilde{K}_t^i\left(x_{i}\right)-F\left(x_{i}\right)\right)\right]+\mathbb{E}_{C_{k}}\left[\tilde{K}_t^i\left(x_{i}\right)-F\left(x_{i}\right)\right]\tilde{\rho}_{t}^{N}\right) \cdot \nabla_{x_{i}} \log \frac{\tilde{\rho}_{t}^{N}}{\rho_{t}^{N}} d \mathbf{x}.
\end{align*}

Take note of $ab\le ra^2+\frac{1}{4r}b^2$, apply it then,
\begin{align*}
    &\int_{\mathbb{R}^{N d}} \mathbb{E}_{C_{k}, \tilde{C}_{k}}\left[\left(\tilde{K}_t^i\left(x_{i}\right)-F\left(x_{i}\right)\right)\left(\tilde{\rho}_{t}^{N, C_{k}}-\tilde{\rho}_{t}^{N, \tilde{C}_{k}}\right)\right] \cdot \nabla_{x_{i}} \log \frac{\tilde{\rho}_{t}^{N}}{\rho_{t}^{N}} d \mathbf{x} \\
\leq &\frac{2}{\sigma} \mathbb{E}_{C_{k}, \tilde{C}_{k}}\left[\int_{\mathbb{R}^{N d}}\left|\tilde{K}_t^i\left(x_{i}\right)-F\left(x_{i}\right)\right|^{2} \frac{\left|\tilde{\rho}_{t}^{N, C_{k}}-\tilde{\rho}_{t}^{N, \tilde{C}_{k}}\right|^{2}}{\tilde{\rho}_{t}^{N, \tilde{C}_{k}}} d \mathbf{x}\right]+\frac{\sigma}{8} \int_{\mathbb{R}^{N d}} \tilde{\rho}_{t}^{N}\left|\nabla_{x_{i}} \log \frac{\tilde{\rho}_{t}^{N}}{\rho_{t}^{N}}\right|^{2} d \mathbf{x} ,
\end{align*}

\begin{align*}
    &\int_{\mathbb{R}^{N d}} \mathbb{E}_{C_{k}}\left[F\left(x_{i}\right)-\tilde{K}_t^i\left(x_{i}\right)\right]\tilde{\rho}_{t}^{N}\cdot \nabla_{x_{i}} \log \frac{\tilde{\rho}_{t}^{N}}{\rho_{t}^{N}} d \mathbf{x} \\
\leq &\frac{2}{\sigma}\int_{\mathbb{R}^{N d}} \left(\mathbb{E}_{C_{k}}\left[\tilde{K}_t^i\left(x_{i}\right)-F\left(x_{i}\right)\right]\right)^2\tilde{\rho}_{t}^{N} d \mathbf{x}+\frac{\sigma}{8} \int_{\mathbb{R}^{N d}} \tilde{\rho}_{t}^{N}\left|\nabla_{x_{i}} \log \frac{\tilde{\rho}_{t}^{N}}{\rho_{t}^{N}}\right|^{2} d \mathbf{x} .
\end{align*}

Apply (\ref{mean_esi}) of theorem \ref{the_mean}, we have,
\begin{align*}
    \int_{\mathbb{R}^{N d}} \left(\mathbb{E}_{C_{k}}\left[\tilde{K}_t^i\left(x_{i}\right)-F\left(x_{i}\right)\right]\right)^2\tilde{\rho}_{t}^{N} d \mathbf{x}\le c \beta^2 \tau^2.
\end{align*}
where $c$ is a constant. Then, consider the Log-Sobolev inequality,
\begin{align}\label{lsi}
    \mathbb{E}\left(f\log f\right)-\left(\mathbb{E}f\right)\log\left(\mathbb{E}f\right)\le\frac{1}{2}\mathbb{E}\left(f|\nabla \log f|^2\right).
\end{align}
apply (\ref{lsi}) so that,
\begin{align*}
    &\int_{\mathbb{R}^{N d}} \tilde{\rho}_{t}^{N}\left|\nabla_{x_{i}} \log \frac{\tilde{\rho}_{t}^{N}}{\rho_{t}^{N}}\right|^{2} d \mathbf{x}=\mathbb{E}_{\rho^N_t}\left(\frac{\tilde{\rho}_{t}^{N}}{\rho^N_t} \left|\nabla_{x_{i}} \log \frac{\tilde{\rho}_{t}^{N}}{\rho_{t}^{N}}\right|^{2}\right) \\
    \ge &2\mathbb{E}_{\rho^N_t}\left(\frac{\tilde{\rho}_{t}^{N}}{\rho^N_t} \log \frac{\tilde{\rho}_{t}^{N}}{\rho_{t}^{N}}\right)-2\mathbb{E}_{\rho^N_t}\left(\frac{\tilde{\rho}_{t}^{N}}{\rho^N_t}\right) \log \mathbb{E}_{\rho^N_t}\left(\frac{\tilde{\rho}_{t}^{N}}{\rho_{t}^{N}}\right) \ge c \mathcal{H}_{N}\left(\tilde{\rho}_{t}^{N} \mid \rho_{t}^{N}\right).
\end{align*}

The same method is applied to the other items in (\ref{j12345}), and the following results can be obtained by integrating them:
\begin{align*}  
\frac{d}{dt} \mathcal{H}_{N} \left( \tilde{\rho}_t^{N} \big| \rho_t^{N} \right) &\leq -\frac{\sigma}{2C_{LS}} H_N \left( \tilde{\rho}_t^{N} \big| \rho_t^{N} \right) + \frac{2}{\sigma N} \sum_{i=1}^{N} \mathbb{E}_{C_k} \int_{\mathbb{R}^d} \left| \tilde{b}_t^{i} (x) - b(x_i) \right|^2 \tilde{\rho}_t^{N,C_k} dx  \\
&+ \frac{2}{\sigma N} \sum_{i=1}^{N} \mathbb{E}_{C_k} \left[ \int_{\mathbb{R}^d} \left| \overline{\tilde{K}_{t}^{C_{k}, i}}(x_{i}) - \tilde{K}_t^i\left(x_{i}\right) \right|^2 \tilde{\rho}_t^{N,C_k} dx \right]  
\\
&+ \frac{2}{\sigma N} \sum_{i=1}^{N} \mathbb{E}_{C_k,\tilde{C}_k} \left[ \int_{\mathbb{R}^{N d}} \left| \tilde{K}_t^i\left(x_{i}\right) - F(x_i) \right|^2 \frac{\left| \tilde{\rho}_t^{N,C_k} - \tilde{\rho}_t^{N,\tilde{C}_k} \right|^2}{\tilde{\rho}_t^{N, \tilde{C}_k}} dx \right]  
\\
&+ \frac{2}{\sigma N} \sum_{i=1}^{N} \int_{\mathbb{R}^{N d}}  \tilde{\rho}_t^{N} \left| F\left( x_i\right)- K * \rho_t(x_i) \right|^2 dx + c \beta^2 \tau^2\\
&:= -\frac{\sigma}{2C_{LS}} H_N \left( \tilde{\rho}_t^{N} \big| \rho_t^{N} \right)+J_1+J_2+J_3+J_4+c \beta^2 \tau^2,
\end{align*}  
where $C_{LS}$ and $c$ are positive constants. Since RBM and   RBM-M are almost identical except for kernel, the estimation of the same term like $J_1$ and $J_4$ can be directly referred to in the analysis \cite{Huang2024MeanFE}. The estimates of several terms related to $\tilde{K}(\cdot)$ are given here. For $J_2$, we have $\tilde{K}_t^i\left(x_{i}\right)=\mathbb{E}\left[\tilde{K}_t^i\left(\tilde{X}_{t}^{i}\right)\mid \tilde{\mathbf{X}}_{t}=\mathbf{x}, C_{k}\right]$ and apply Tayler's expansion for any $t \in\left[T_{k}, T_{k+1}\right)$:
\begin{align*}
\overline{\tilde{K}_{t}^{C_{k}, i}}(x_i)-\tilde{K}_t^i\left(x_{i}\right) & =\mathbb{E}\left[\tilde{K}_t^i\left(\tilde{X}_{T_{k}}^{i}\right)-\tilde{K}_t^i\left(\tilde{X}_{t}^{i}\right) \mid \tilde{\mathbf{X}}_{t}=\mathbf{x}, C_{k}\right] \\
& =\mathbb{E}\left[\tilde{X}_{T_{k}}^{i}-\tilde{X}_{t}^{i} \mid \tilde{\mathbf{X}}_{t}=\mathbf{x}, C_{k}\right] \cdot \nabla_{x_{i}} \tilde{K}_t^i\left(x_{i}\right)+\hat{r}_{t}\left(x_{i}\right),
\end{align*}

where the remainder takes the form
\begin{align*}
    \hat{r}_{t}\left(x_{i}\right):=\frac{1}{2} \mathbb{E}\left[\left(\tilde{X}_{T_{k}}^{i}-\tilde{X}_{t}^{i}\right)^{\otimes 2}: \int_{0}^{1} \nabla_{x_{i}}^{2} \tilde{K}_t^i\left((1-s) \tilde{X}_{t}^{i}+s \tilde{X}_{T_{k}}^{i}\right) d s \mid \tilde{\mathbf{X}}_{t}=\mathbf{x}, C_{k}\right].
\end{align*}
Note that the Hessians of K have at most polynomial growth through, ignore the higher order of $\beta$ and $ \sup_{t\ge0}\mathbb{E}\left(\left|\tilde{X}_t^i\right|^p|\mathcal{F}_C \right) \le C_p$ through \cite{Jin2020OnTM}:
\begin{align*}
\left|\hat{r}_{t}\left(x_{i}\right)\right| & \leq \int_{0}^{1} \mathbb{E}\left[\left|\nabla_{x_{i}}^{2} \tilde{K}_t^i\left((1-s) \tilde{X}_{t}^{i}+s \tilde{X}_{T_{k}}^{i}\right)\right| \cdot\left|\tilde{X}_{T_{k}}^{i}-\tilde{X}_{t}^{i}\right|^{2} \mid \tilde{\mathbf{X}}_{t}=\mathbf{x}, C_{k}\right] ds\\ 
& \lesssim  C\mathbb{E}\left[\left| \beta \left(1+\left| \tilde{X}_t^i\right|^p+\left| \tilde{X}_{T_k}^i\right|^p \right)+\beta(1-\beta) \left(1+\left| \tilde{X}_{t-1}^i\right|^p+\left| \tilde{X}_{T_k-1}^i\right|^p \right) \right| \cdot\left|\tilde{X}_{T_{k}}^{i}-\tilde{X}_{t}^{i}\right|^{2} \mid \tilde{\mathbf{X}}_{t}=\mathbf{x}, C_{k}\right]\\
& \leq \tilde{C} \mathbb{E}\left[\left|\tilde{X}_{T_{k}}^{i}-\tilde{X}_{t}^{i}\right|^{2} \mid \tilde{\mathbf{X}}_{t}=\mathbf{x}, C_{k}\right] .
\end{align*}

The remaining part of $J_2$ has nothing to do with kernel, and the conclusion can be drawn by referring to the proof of \cite{Huang2024MeanFE}:
\begin{align*}
 J_2 \leq \tilde{c}^{\prime} \tau^{2}\left(3+\frac{1}{N} \mathcal{I}\left(\tilde{\rho}_{T_{k}}^{N}\right)\right).
\end{align*}

Then for the $J_3$, through the assumption that $|\overline{K}_t^i-F(x_i)|$ is bounded and kernel $K(\cdot)$ is Lipschitz continuous:
\begin{align*}
    \left|\tilde{K}_t^i\left(x_{i}\right)-F\left(x_{i}\right)\right| \le \left|\overline{K}_t^i\left(x_{i}\right)-F\left(x_{i}\right)\right|+ \left|\overline{K}_t^i\left(x_{i}\right)-\tilde{K}_t^i\left(x_{i}\right)\right|
    \le C_{bound}+C_{L}L.
\end{align*}
which means $|\tilde{K}_t^i-F(x_i)|$ is also bounded, then we only need to estimate $\mathbb{E}_{C_{k}, \tilde{C}_{k}}\left[\int_{\mathbb{R}^{N d}}\frac{\left|\tilde{\rho}_{t}^{N, C_{k}}-\tilde{\rho}_{t}^{N, \tilde{C}_{k}}\right|^{2}}{\tilde{\rho}_{t}^{N, \varepsilon_{k}}} d \mathbf{x}\right]$ which has been given by \cite{Huang2024MeanFE}. So that we have
\begin{align*}
    J_3 \le \tilde{c}^{\prime\prime} \tau^{2}\left(1+\frac{1}{N} \mathcal{I}\left(\tilde{\rho}_{T_{k}}^{N}\right)\right).
\end{align*}

Finally, combining all of the above results can be obtained
\begin{align*}
    \frac{d}{d t} \mathcal{H}_{N}\left(\tilde{\rho}_{t}^{N} \mid \rho_{t}^{N}\right) \le -C_0 \mathcal{H}_{N}\left(\tilde{\rho}_{t}^{N} \mid \rho_{t}^{N}\right)+C_1 \tau^2 +\frac{C_2}{N},
\end{align*}
then
\begin{align*}
    \sup_t\mathcal{H}_{N}\left(\tilde{\rho}_{t}^{N} \mid \rho_{t}^{N}\right) \le \mathcal{H}_{N}\left(\tilde{\rho}_{t}^{0} \mid \rho_{t}^{0}\right)+C_1 \tau^2 +\frac{C_2}{N}.
\end{align*}
\end{proof}

\section{Numerical experiment}\label{section4}
This section mainly demonstrates a set of numerical experiments to help verify the theories presented in the previous section. In the following numerical experiments, regardless of whether the RBM or   RBM-M algorithm is used, the total number of particles $N$ in the interactive particle system is 10,000, and the number of particles $p$ in each group is 360. This setting takes into account the computational efficiency, and the numerical instability caused by a small number of particles in each group can be avoided. 

The following numerical experiments can correspond to the previous theoretical analysis one by one, including the  convergence order of the kernel function after regularizing, the error of   RBM-M and the good statistical properties of   RBM-M. In addition, the theoretical results are supplemented to some extent by numerical experiments.

\subsection{The error is proportional to $\sqrt{\tau}$}
According to the theorem \ref{rbmm} that we gave the description, the error of   RBM-M and discrete time step $\tau$ should be proportional. 

We select biot-savart to verify this conclusion. In the corresponding interacting particle systems, the corresponding kernel function is\cite{Chaintron2022PropagationOC}:
\begin{align*}
    K(z)=\frac{z^{\bot}}{\parallel z \parallel^2}.
\end{align*}
Here $z^{\bot}$ is the orthogonal complement of $z$. At the same time, for the sake of simplicity, the diffusion term $\sigma$ of the equation is selected to obey the standard normal distribution, and the applied flow $V$ of the system is 0.

The numerical results are recorded in table \ref{table_1}, and the result of numerical experiment that we do can prove it greatly:
\begin{table}[!ht]
    \caption{\textbf{The relationship between the error and the time step.}}
    \label{table_1}
    \centering
    \setlength{\tabcolsep}{20mm}
    {\begin{tabular}{cc}
    \toprule
    $\tau \times 10^{-3}$ &error \\
    \midrule
    16 & 0.0188 \\
    4 & 0.0102 \\
    1 & 0.0051  \\
    0.25 & 0.0025   \\

    \bottomrule
    \end{tabular}}
\end{table}

It can be seen that as the time step of the dispersion becomes $\frac{1}{4}$ of the original, the error of the algorithm basically becomes half of the original. This shows that $\sqrt{\tau}$ is proportional to error, which is exactly what our theory tells us.

\subsection{Compare the error between   RBM-M and RBM }
In this section, we will primarily focus on comparing the algorithmic accuracy of   RBM-M and RBM under various scenarios (primarily involving different kernel functions, representing different interacting particle systems). Some of these interacting particle systems are derived from real physical equations, while others are self-constructed systems with significant singularity, which are specifically designed to verify the stability of the algorithms. The comparison will include both first-order and higher-order systems. By examining several numerical examples, we can comprehensively evaluate the performance between the two algorithms.

As an illustrative example, we will choose a second-order interacting particle system. The specific form of this second-order system is:
\begin{align}\label{position}
     \mathrm{d} X_{t}^{i,N}&=V_{t}^{i,N} dt,
\end{align}
\begin{align}\label{velocity}
      \mathrm{d} V_{t}^{i}=\frac{1}{N} \sum_{j \neq i} K\left(\left|X_{t}^{j}-X_{t}^{i}\right|\right)\left(V_{t}^{j}-V_{t}^{i}\right) \mathrm{d} t+\sigma \mathrm{d} B_{t}^{i},
\end{align}
where the kernel function k has the concrete form:
\begin{align}
    K\left(\left|X_{t}^{j}-X_{t}^{i}\right|\right)=
    \frac{X_{t}^{j}-X_{t}^{i}}{1+\mid X_{t}^{j}-X_{t}^{i} \mid^2} .
\end{align}
In this second-order system, the variable $x$ can be interpreted as the position of the particle, and the variable $v$ can be interpreted as the velocity of the particle. This provides a clear physical interpretation of the entire system. The system is a concrete representation of Newton's second law of motion. Equation (\ref{position}) shows that the position of the particle is determined by the product of velocity and time, while Equation (\ref{velocity}) can be understood as the effect of an external force on the particle, namely the acceleration, and the velocity is determined by the acceleration.

We set the initial conditions of the stochastic differential equation (SDE) such that the initial values of $X$ are distributed uniformly within a unit circle, while the initial values of $V$ are set to 0. Additionally, the evolution time is set to $T = 0.02s$, and the discrete time step is $\tau = 0.001s$. In this case, we will compare the real solution of the interacting particle system, the numerical solution of RBM, and the numerical solution of   RBM-M (with a control parameter $\beta = 0.01$). The results are shown in the figure \ref{figure_2} and \ref{figure_3}.
\begin{figure}[!ht]
        \centering 
        \includegraphics[scale=0.5]{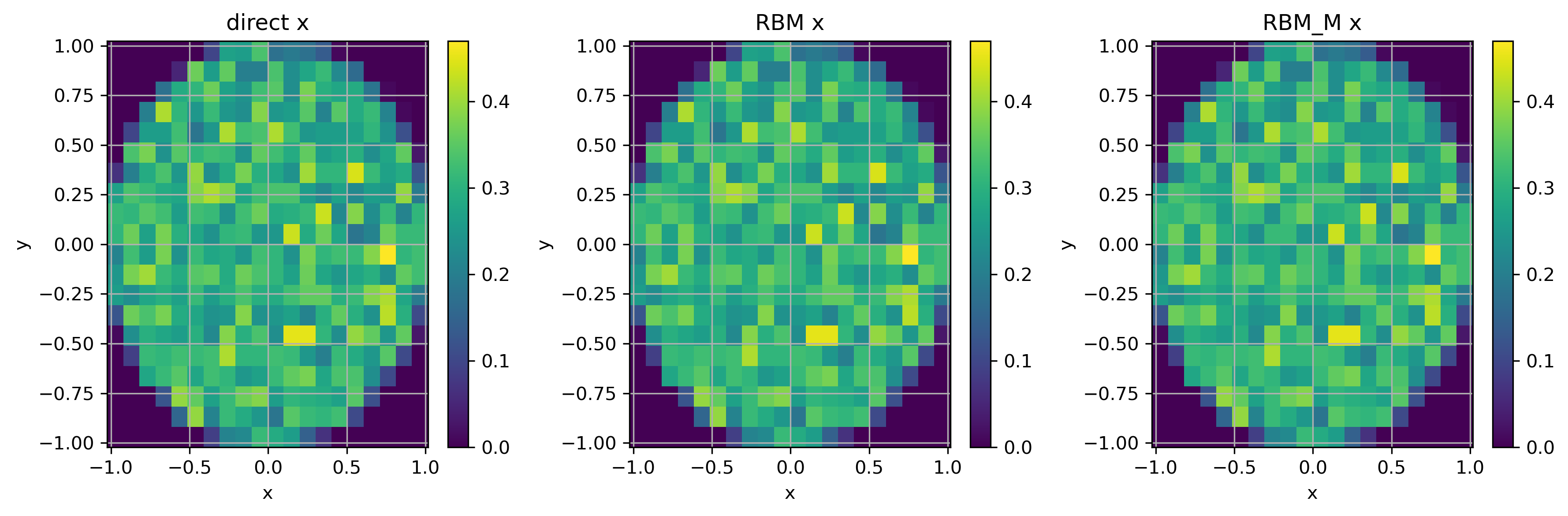}
        \caption{Two order system--x.}
        \label{figure_2}
    \end{figure}

\begin{figure}[!ht]
        \centering 
        \includegraphics[scale=0.5]{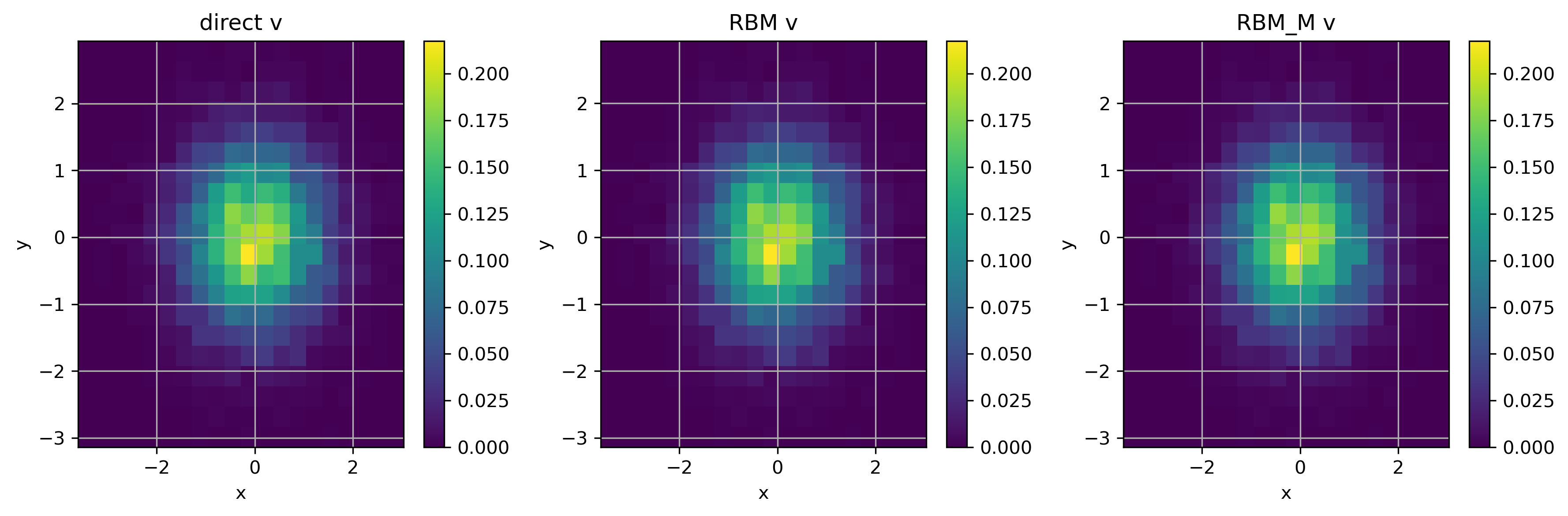}
        \caption{Two order system--v.}
        \label{figure_3}
    \end{figure}
It can be intuitively observed from the above figure that both RBM and   RBM-M have achieved good results. 

In addition to intuitively comparing the results from the figure above, the specific $L_2$ values of RBM and   RBM-M are also listed here. The L2 error here is defined as:
\begin{align}\label{l2}
    l=\sqrt{\sum_{i=1}^{N}(x^i-x^i_{true})^2}.
\end{align}
Here $x^i$ represents the numerical value and $x^i_{true}$ represents the real solution. Under this definition, the specific numerical result is: the error of RBM is $4.489 \times 10^{-2}$, and the error of   RBM-M is $4.419 \times 10^{-2}$. In this example, although the error of   RBM-M is slightly lower than that of RBM, the effect is not significant. We speculate that because this second-order interacting particle system is relatively smooth, the correction brought by the introduction of momentum is not substantial. At the same time, the introduction of momentum may also introduce some errors, so the value of the control parameter $\beta$ is small in this case. However, from another perspective, the performance of   RBM-M should be no worse than RBM, provided that the parameter $\beta$ is reasonably selected.

To further demonstrate that the   RBM-M algorithm can indeed achieve good results when the interacting particle system has a large singularity, we construct the following system:
\begin{align}
    d X^{i}=-\nabla V\left(X^{i}\right) d t+\frac{1}{N-1} \sum_{j \neq i} K\left(X^{i}-X^{j}\right) d t+\sigma d B^{i},
\end{align}
where kernel function is:
\begin{align}
    K_x(s_i-s)=\frac{x_i-x}{\cosh(\parallel s_i-s \parallel^2)},
\end{align}
\begin{align}
    K_y(s_i-s)=\frac{\cosh(y_i-y)}{\parallel s_i-s \parallel^2}.
\end{align}
here $s=(x,y)$ represents the position of the particle, and $K_x(\cdot)$ and $K_y(\cdot)$ represent the components of the kernel function in the X-axis and y-direction, respectively. The additional set system flow is:
\begin{align}
    -\nabla V\left(s_{i}\right)=\binom{0}{cos(x_i)} .
\end{align}
Due to the large singularity of this system,   RBM-M theoretically needs stronger correction effect, so the control parameter $\beta$=0.1 here has a larger value. The remaining settings are consistent with the above second-order system, i.e., the initial value is that the initial values of $X$ are distributed uniformly in a unit circle, the evolution time of termination is $T=0.02s$, and the discrete time step is $\tau=0.001s$. So the standard solution, the solution of RBM and the solution of   RBM-M are respectively in figure \ref{figure_4}.
\begin{figure}[!ht]
        \centering 
        \includegraphics[scale=0.5]{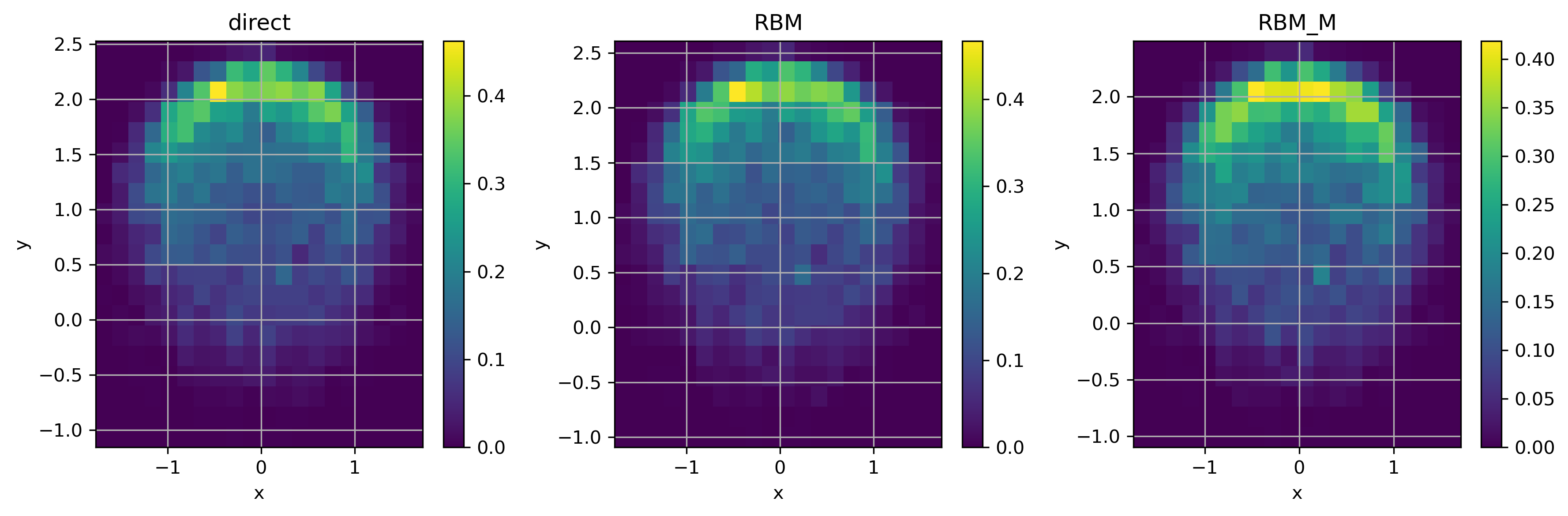}
        \caption{A case of large singularity.}
        \label{figure_4}
    \end{figure}
    
There is still no difference between the two numerical algorithms. Similarly, according to the definition of formula (\ref{l2}), compare the $L_2$ error of the value, where the error of RBM is $7.946\times10^{-2}$, and the error of   RBM-M is only $4.691\times10^{-2}$. In this example, the difference between the two algorithms is still obvious, and the effect of   RBM-M is obviously better than that of RBM.

In addition, it should be noted that in this example, if the original equation is solved, when $N= 10,000$ particles are selected, it takes $54.48s$ to solve, while with the same number of particles, it only takes $4.09s$ to use RBM and $4.45s$ to use   RBM-M. This shows that both RBM and   RBM-M can significantly reduce the computation time, and   RBM-M has little additional time compared to RBM.

At the same time, with this example, the influence of control parameter $\beta$ on the correction effect of   RBM-M algorithm can be explored. We set the value of $\beta$ from $\left \{ 0.04, 0.06, 0.08, 0.1, 0.12 \right \}$ , and take the selected beta as the parameter of   RBM-M to compare the accuracy of   RBM-M algorithm under different parameter conditions, the results are in table \ref{table_2}.

\begin{table}[!ht]
    \caption{\textbf{The effect of parameter $\beta$ on the algorithm.}}
    \label{table_2}
    \centering
    \setlength{\tabcolsep}{10mm}
    {\begin{tabular}{ccc}
    \toprule
    $\beta$ &RBM error  &  RBM-M error\\
    \midrule
    0.04 &$8.029\times10^{-2}$ &$4.529\times10^{-2}$\\
    0.06 & $7.984\times10^{-2}$ &$3.449\times10^{-2}$\\
    0.08 & $8.057\times10^{-2}$  &$3.519\times10^{-2}$\\
    0.10 & $7.946\times10^{-2}$  &$4.695\times10^{-2}$\\
    0.12 & $8.041\times10^{-2}$  & $6.423\times10^{-2}$\\
    \bottomrule
    \end{tabular}}
\end{table}

As can be seen from the above table, with the increase of control parameter $\beta$ from 0.04 to 0.12, the error of   RBM-M changes to the original 56.41\%, \textbf{43.19\%, 43.67\%,} 59.09\% and 79.88\% respectively, which is, when $\beta$ is selected reasonably, the error can be reduced to less than half of the original. Due to the large singularity of the system, the corrective effect of the algorithm is significant. Therefore, in the beginning, as $\beta$ increases, the performance of the   RBM-M algorithm improves gradually. However, as $\beta$ continues to increase, the corrective effect reaches saturation, and the positive impact of further increasing $\beta$ becomes limited, while the introduced error becomes larger and larger, resulting in a deterioration of the final outcome. In summary, when the system singularity is small, the selected $\beta$ should be small; and when the system singularity is large, the selected $\beta$ should be large. In addition, we also tested on other systems, The kernel functions employed in the various systems are as follows: \\
K1: Biot-Savart kernel:
\[K(z)=\frac{z^{\bot}}{\parallel z\parallel}.\]
K2: two order system:

\[ \mathrm{d} X_{t}^{i,N}=V_{t}^{i,N} dt,\]

\[ \mathrm{d} V_{t}^{i}=\frac{1}{N} \sum_{j \neq i} K\left(\left|X_{t}^{j}-X_{t}^{i}\right|\right)\left(V_{t}^{j}-V_{t}^{i}\right) \mathrm{d} t+\sigma \mathrm{d} B_{t}^{i}.\]
K3: repulsive-attractive Morse potential\cite{Carrillo2013ExplicitFS}:

\[ K(\mathbf{x}) = C_r e^{-\frac{\|\mathbf{x}\|}{l_r}} - C_a e^{-\frac{\|\mathbf{x}\|}{l_a}}. \]
K4: Let the particle position be set as $s_i=(x_i, y_i)$

x-dimension:
\[\frac{x_i-x}{\cosh(\parallel s_i-s \parallel^2)},\]

y-dimension:
\[\frac{e^{-(y_i-y)^2}}{\parallel s_i-s \parallel}.\]
K5: Let the particle position be set as $s_i=(x_i, y_i)$

x-dimension:
\[\frac{\sinh(x_i-x)}{\parallel s_i-s \parallel^2},\]

y-dimension:
\[\frac{\cosh(y_i-y)}{\parallel s_i-s \parallel^2}.\]\\
K6: Let the particle position be set as $s_i=(x_i, y_i)$

\[K(z)=\frac{\ln(-exp({z}^2)+1)}{\tan(\parallel z \parallel^2)/10+2\pi}.\]\\
The corresponding $L_2$ error is shown in the table \ref{table_3}.
\begin{table}[H]
    \caption{\textbf{Error contrast between RBM and   RBM-M.}}
    \label{table_3}
    \centering
    \setlength{\tabcolsep}{10mm}
    {\begin{tabular}{ccc}
    \toprule
    kernel &RBM error  &  RBM-M error \\
    \midrule
    1 & $8.0033\times10^{-3}$ &$7.8905\times10^{-3}$\\
    2 & $4.849\times10^{-2}$ &$4.819\times10^{-2}$\\
    3 & $7.817\times10^{-2}$  &$7.792\times10^{-2}$\\
    4 & $7.946\times10^{-2}$  &$4.691\times10^{-2}$\\
    5 & $3.589\times10^{-1}$  & $2.342\times10^{-1}$\\
    6 & $4.289\times10^{-1}$  & $2.423\times10^{-1}$\\

    \bottomrule
    \end{tabular}}
\end{table}

The first column represents the different kinds of kernels, the second column represents the error of RBM solution, and the third column represents the error of    RBM-M solution. It is important to note that due to the different final results evolved from various equations (some distributions are more concentrated, while others are more diffuse), comparing the absolute errors between different equations is not very meaningful. The focus should be on comparing the errors of the two algorithms.

It can be clearly observed that as the kernel singularity increases, the effect of momentum correction becomes more pronounced. This suggests that the local momentum algorithm has significant effects in certain situations.

\subsection{Probabilistic explanation that   RBM-M is more stable than RBM}
Now, let us consider characterizing the Singularity of kernels.
We define a family of similar kernels as:
\begin{align}
     f(x)=\left\{
\begin{aligned}
\ \ -&\frac{1}{x-1} & x<1-\alpha\\
&\frac{1}{\alpha^2} \mid x-1\mid  \ \ \ \ & 1-\alpha<x<1+\alpha\\
&\frac{1}{x-1} \ \ \ \ &x>\alpha+1
\end{aligned}
\right.
,
\end{align}
where $\alpha$ is a parameter to measure the steepness of a function,and $\alpha$ is smaller, the function is more steep ( when selecting a function,  the reason for not taking 0 as the center is to avoid some division of 0 that may lead to a decrease in numerical accuracy). Figure \ref{figure_5} is the figure of $f(x)$:
\begin{figure}[!ht]
        \centering 
        \includegraphics[scale=0.8]{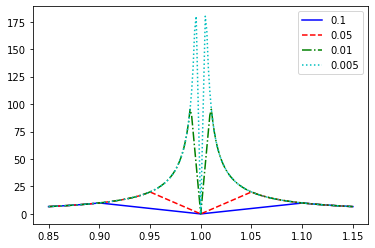}
        \caption{The steepness of different $\alpha$.}
        \label{figure_5}
    \end{figure}

If $\alpha \to 0$, the function $f(x)$ becomes a discontinuous function. In this situation, there is a significant degree of singularity associated with $f(x)$. If we were to use the standard RBM approach in this case, the error would be quite large. However, as demonstrated by the proof presented earlier, the   RBM-M algorithm outperforms the standard RBM when the function exhibits a high degree of singularity. Let us now compare the effects of these two approaches in figure \ref{figure_6}:
\begin{figure}[!ht]
        \centering 
        \includegraphics[scale=0.8]{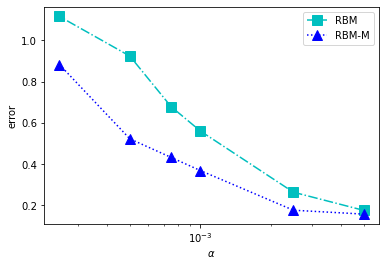}
        \caption{Comparison of errors between RBM and   RBM-M.}
         \label{figure_6}
    \end{figure}

The figure above compares the errors between the two methods. For each data point represented in the figure, the reported error is the average of 10 repeated calculations, in order to mitigate the effects of random factors. The closer the $x$-axis direction is to $1$, the smaller the $\alpha$, which means the function is more steep and has greater singularity. From the graph, it can be observed that as $\alpha$ decreases, the singularity of the function increases, and the error of the standard RBM becomes increasingly larger than that of the   RBM-M method.

The specific data is in table \ref{table_4}.
\begin{table}[!ht]
    \caption{\textbf{Comparison of errors of two algorithms for functions with different singularity.}}
    \label{table_4}
    \centering
    \setlength{\tabcolsep}{10mm}
    {\begin{tabular}{cccc}
    \toprule
    $\alpha$ &RBM  &  RBM-M &advantage\\
    \midrule
    $5\times10^{-3}$ & $0.1743$ &$0.1562$ &$0.0181$\\
    $2.5\times10^{-3}$ & $0.2627$ &$0.1752$ &$0.0875$\\
    $1\times10^{-3}$ & $0.5601$  &$0.3679$ &$0.1923$\\
    $7.5\times10^{-4}$ & $0.6784$  & $0.4317$ &$0.2467$\\
    $5\times10^{-4}$ & $0.9236$  & $0.5218$ &$0.3418$\\
    $2.5\times10^{-4}$ & $1.1161$  & $0.8827$ &$0.2835$\\
       
    \bottomrule
    \end{tabular}}
\end{table}

In this numerical experiment, we set the coefficient of the   RBM-M method, $\beta$, to 0.1. When the function is flatter, we use a smaller $\beta$, and when the function is more steep, we use a larger $\beta$ (the final difference has decreased because the singularity of the function is too large, and a larger $\beta$ should be selected). All of these choices follow the theorem \ref{error}.

Moreover, we can explain the main idea of RBM and   RBM-M from a sampling perspective. (\ref{e1}) and (\ref{e2}) have explained that the core concept is to use the distribution of a few particles to estimate the overall distribution of all particles. When the singularity of the kernel is relatively small, and the distribution is relatively flat, it is easier to estimate. However, if the kernel singularity is relatively high, as mentioned earlier, it becomes more challenging to estimate accurately.
\begin{figure}[!ht]
        \centering 
        \includegraphics[scale=0.8]{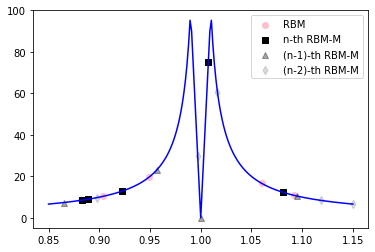}
        \caption{The sampling of RBM and   RBM-M.}
        \label{figure_7}
    \end{figure}
The figure \ref{figure_7} illustrates the sampling processes of two distinct methods. In each time step, the RBM requires a single sampling operation, while the   RBM-M performs multiple sampling iterations due to its consideration of multiple time steps. The pink circular data points represent the particles sampled by the RBM at a specific time step, whereas the black data points depict the samples generated by the   RBM-M. Since each step in the   RBM-M process influences the subsequent evolution of the system, it is analogous to having a greater number of data points being sampled. The transparency of the black data points in the figure indicates the intensity of the influence of the data on the   RBM-M results. The earlier the evolution time, the more transparent the corresponding data points, and the effects of the system's previous evolution will gradually diminish. This visual representation highlights the key differences between the sampling approaches of the RBM and   RBM-M methods, underscoring the importance of considering multiple time steps in the   RBM-M approach and the resulting impact on the data's influence on the overall system evolution.

The above figure illustrates the sampling process of the two methods. In each time step, the standard RBM needs to sample only once, while the   RBM-M samples multiple times by considering multiple time steps. If we directly solve the stochastic differential equations (SDEs), we will definitely sample the steep parts of the function. However, if we use the standard RBM, our sampling may not necessarily represent the entire distribution: if the steep part of the function is not extracted during sampling, the $\mid \frac{1}{p-1} \sum_{j \in C_i, j \neq i} K(x^{i} - x^{j}) \mid$ will be slightly small. Even if the steep part of the function is successfully extracted during sampling, the large outlier will only be weakened by the coefficient $\frac{1}{p-1}$, when it should be accurately using $\frac{1}{N-1}$, so the $\mid \frac{1}{p-1} \sum_{j \in C_i, j \neq i} K(x^{i} - x^{j}) \mid$ will be slightly large. Since the   RBM-M uses exponential average weighting to consider more particles, it can to some extent avoid the errors caused by sampling.

Furthermore, it can be inferred from (\ref{var}) that the variance of   RBM-M is slightly smaller than that of the standard RBM, and both should be (asymptotically) unbiased estimates in an ideal state, so the   RBM-M is more likely to converge to a good result.

\section{Conclusions and future work}\label{section5}
The   RBM-M algorithm is an extension of the RBM algorithm that introduces a momentum correction. Theoretical derivation and numerical experiments have shown that the   RBM-M algorithm can achieve better results than the standard RBM algorithm when the interacting particle system has a relatively large singularity, and the value of the control parameter $\beta$ should be relatively large in such cases. Conversely, when the singularity of the interacting particle system is relatively small, the value of the control parameter $\beta$ should be relatively small.

Furthermore, there is potential to expand the application domain of this algorithm to make it suitable for a wider range of particle systems. For example, other algorithms such as Adam \cite{Kingma2014AdamAM} and RMSProp \cite{XU202117} could be considered for the correction of RBM. Simultaneously, identifying the optimal parameter $\beta$ is also a valuable area of research.

\section*{Data availability}
No data was used for the research described in the article.

\section*{Acknowledgement}
Z.-W. Zhang's research is    .
J.-R. Chen's work    . 
The authors are also very grateful to anonymous reviewers for their valuable comments and suggestions, which have greatly helped to improve our manuscript.

\bibliographystyle{elsarticle-num}
\bibliography{main}
\end{sloppypar}
\end{document}